\documentclass[12pt, a4paper]{article}

\usepackage[utf8]{inputenc}
\usepackage[T1]{fontenc}
\usepackage{amsthm}
\usepackage{amsmath}
\usepackage{amsfonts}
\usepackage{amssymb}
\usepackage{caption}
\usepackage{float}
\usepackage{graphicx}
\usepackage{ifthen}
\usepackage{tikz}
\usepackage{tikz-cd}
\usepackage{subcaption}
\usepackage{longtable}
\usepackage{fullpage}
\usepackage{textcomp}
\usepackage{mathtools}
\usepackage[all]{xy}
\usepackage{makecell}
\usepackage[hidelinks]{hyperref}
\usepackage{setspace}
\usepackage[a4paper]{geometry}
\usepackage{color}
\usepackage{amsmath}
\usepackage{algorithm}
\usepackage{afterpage}
\usepackage[noend]{algpseudocode}
\usepackage{chngcntr}

\usetikzlibrary{patterns}
\usetikzlibrary{intersections}
%\usetikzlibrary{calc,decorations.pathreplacing}

\newtheorem{theorem}{Theorem}
\newtheorem{construction}[theorem]{Construction}
\newtheorem{proposition}[theorem]{Proposition}
\newtheorem{lemma}[theorem]{Lemma}
\newtheorem{corollary}[theorem]{Corollary}

\newtheorem{conjecture}[theorem]{Conjecture}
\newtheorem{definition}[theorem]{Definition}
\newtheorem{example}[theorem]{Example}

\counterwithin{theorem}{section}
\counterwithin{equation}{section}
\counterwithin{figure}{section}
\counterwithin{table}{section}

\newcommand \crosssize{0.02}
\newcommand \NodalCurve[4] {\draw (#1, #2+2*#4) .. controls (#1+#3,#2-#4*4) and (#1+#3, #2+4*#4) .. (#1,#2-#4*2);}
\newcommand \Lawrence [1] {{#1}}
\newcommand \CharSheaf [1] {\overline{\mathcal{M}}_{#1}}

\algnewcommand{\LeftComment}[1]{\Statex \(\triangleright\) #1}

\def \Spec {\operatorname{Spec} \; }

\def \LogSpec {\operatorname{Spec}_{log} \; }
\def \PP {\mathbb{P}}
\def \NN {\mathbb{N}}

\def \CC {\mathbb{C}}
\def \AA {\mathbb{A}}
\def \RR {\mathbb{R}}
\def \ZZ {\mathbb{Z}}

\def \Cone {\mathcal{C}one}
\def \Bad {Bad}

\def \MacBeth {\mathcal{D}}
\def \Coker {\mathcal{C}oker}
\def \Ker {\mathcal{K}er}
\def \Hom {\mathcal{H}om}
\def \Mor {\operatorname{Hom} \; }
\def \Geom {\operatorname{Geom} \; }
\def \StableModuli {\overline{\mathcal{M}}}
\def \LogModuli {\overline{\mathcal{M}}^\dagger}

\def \fib {\textrm{fib}}
\def \tr {\textrm{tr}}
\def \contr {\textrm{contr}}
\def \SheafExt {\mathcal{E}xt}
\def \Hilb {\mathcal{H}ilb^2 (\mathbb{P}^1)}
\def \MTwoOne {\StableModuli_{0,0} (\PP^1, 2)}

\begin{document}

\title{Convergence of the mirror to a rational elliptic surface}
\author{Lawrence Jack Barrott}
\date{}

\maketitle

\begin{abstract}

The construction introduced by Gross, Hacking and Keel in~\cite{MirrorSymmetryforLogCY1} allows one to construct a formal mirror family to a pair $(S,D)$ where $S$ is a smooth rational projective surface and $D$ a certain type of Weil divisor supporting an ample or anti-ample class. In that paper they proved two convergence results. Firstly that if the intersection matrix of $D$ is not negative semi-definite then the family they construct lifts to an algebraic family. Secondly they prove that if the intersection matrix is negative definite then their construction lifts along certain analytic strata on the base, and then over a formal neighbourhood of this. In the original version of that paper they claimed that if the intersection matrix were negative semi-definite then family in fact extends over an analytic neighbourhood of the origin but gave an incorrect proof.

In this paper we correct this error. We explain how the general Gross-Siebert program can be used to reduce construction of the mirror to such a surface to calculating certain relative Gromov-Witten invariants. We then relate these invariants to the invariants of a new space where we can find explicit formulae for the invariants. From this we deduce analytic convergence of the mirror family, at least when the original surface has an $I_4$ fibre.

\end{abstract}

\section*{Introduction}

The geometry of surfaces often is a rich playground in which to test new conjectures and constructions. In~\cite{MirrorSymmetryforLogCY1} the authors introduced a construction of a mirror family to certain pairs $(S,D)$, called Looijenga pairs. This built upon work of~\cite{AurouxKatzarkovOrlov} who had made predictions of the mirror families to various del Pezzo surfaces. These del Pezzo surfaces are the smooth Fano surfaces, they have ample anti-canonical class. However both constructions make sense when $-K_S$ is not ample, but merely nef. This corresponds to the case where $S$ is a rational elliptic surface and $D=F$ a rational fibre.

There is another direction from which to approach this question. Gross, Hacking and Keel were trying to understand mirror symmetry over the moduli space of $K3$ surfaces. Recall that there is a classification of the degenerations of a $K3$ surface.

\begin{theorem}[Structure of $K3$ degenerations]

Let $\pi:X \rightarrow \AA^1$ be a semi-stable family such that the fibre $X_t$ for $t \neq 0$ is a smooth $K3$ surface and the canonical bundle of the family is trivial. Then the central fibre $X_0$ is isomorphic to one of the following types

  \begin{itemize}
  \item[I] A smooth $K3$ surface.
  \item[II] A chain $S_1, \ldots S_n$ of surfaces such that $S_1$ and $S_n$ are rational, the other components are ruled surfaces over elliptic
curves and $S_i \cap S_{i+1}$ are smooth elliptic curves.
    \item[III] A complex of rational surfaces $S_1 \ldots S_n$ such that $V_i \cap V_j$ are smooth rational curves. The dual complex gives a triangulation of $S^2$.
  \end{itemize}

In the second case one can perform a birational modification of the family to produce a type II degeneration where the central fibre is the union of only two components. In this case the class $S_1 \cap S_2$ is an anti-canonical curve on each component.

\begin{proof}

  See~\cite{DegenerationsOfK3SurfacesAndEnriquesSurfaces}.
  
\end{proof}  

\end{theorem}

So one could hope to apply their construction to a point on this boundary, a type II degeneration, and then extend out the family constructed over the entire moduli space. This paper therefore answers first half to constructing the mirror to a type II degeneration. There is much more work to be done before one can extend this example to the type II fibre.

This paper begins by recalling the general philosophy of the Gross-Siebert program, explaining the appearance of scattering diagrams and broken lines. This should be seen as a continuation of my work in~\cite{ExplicitEquationsForMirrorFamiliesToLogCalabiYauSurfaces} which features a slightly introduction to this philosophy. We explore how the combinatorial nature of these objects limits the data we must calculate. In our case this reduces the problem to calculating the number of rational curves meeting the fixed fibre $F$ with tangency orders one and two.

Having done that we discuss work of Bryan and Leung who proved that the number of curves in a class $E + nF$ for $E$ a section appear as coefficients of a known modular form. These include all the curves tangent to $F$ to order one. We extend their results to our case, where we are interested in relative invariants rather than absolute invariants.

With this warm up done we move to the main challenge of this paper, counting the curves tangent to the boundary at a single point with tangency order two. To do this we construct a threefold whose geometry allows us to lift curves from $S$. This threefold is a nef complete intersection in a toric variety, hence we can construct the Gromov-Witten theory via Givental's $I$ and $J$ functions. Proving relations between the invariants on $S$ and on the threefold requires that we delve into the obstruction theories on both, applying techniques from~\cite{VirtualPullbacks} and~\cite{VirtualPushforward}. 

Having done this we then return to the original construction and deduce that it converges in a neighbourhood of the origin, exactly as desired.

Throughout this $S$ will denote a rational elliptic surface, which by Lemma IV.1.2 of~\cite{GeometryofEllipticSurfaces} is isomorphic to the blowup in the nine points lying on the intersection of two cubics in $\PP^2$. The Chow group $A_1 (S)$ is spanned by $H$, the pullback of a hyperplane, and $E_1, \ldots, E_9$, the exceptional curves of the blowup. The Chow groups $A_0(S)$ and $A_2(S)$ are both one dimensional, spanned by a point and a fundamental class respectively. $F$ will denote a general fibre, whilst $F_0$ will denote an $I_4$ fibre, a cycle of four $-2$-curves. We will write $S^\dagger$ for $S$ together with the divisorial log structure coming from $F_0$. By an inclusion of components $i:X \rightarrow Y$ we mean an isomorphism from $X$ to a union of connected components of $Y$.

This work was the bulk of my thesis project, and I must thank Mark Gross for suggesting this problem to me. Paul Hacking and Sean Keel also contributed both in writing their paper with Mark Gross and through discussions and motivation for the various different parts of the construction. Tom Coates gave a wonderfully clean description of the work of Givental which greatly improved my understanding of how those techniques work. My thesis was funded by an Internal Graduate Studentship of Trinity College Cambridge, an extension grant from the Department of Pure Maths and Mathematical Statistics in Cambridge, and a research studentship from the Cambridge Philosophical Society. 

\section{The Gross-Siebert program}

The Gross-Siebert program began as an attempt to make rigorous geometric constructions from the SYZ conjecture. 

\begin{conjecture}[The SYZ conjecture]

Let $X, \check{X}$ be a mirror pair of \emph{Calabi-Yau varieties}. Then there is an affine manifold with singularities $B$ and maps $\phi: X \rightarrow B$, $\check{\phi}: \check{X} \rightarrow B$ which are dual special Lagrangian fibrations. This is shown schematically on the next page.

\begin{figure}[h]
\centering
\begin{tikzpicture}
\draw (-6,-2) -- (-2,-2) -- (-2,2) -- (-6,2) -- (-6,-2);
\draw (2,-2) -- (6,-2) -- (6,2) -- (2,2) -- (2,-2);
\draw (-2.2, -1.8) node {$X$};
\draw (5.8, -1.8) node {$\check{X}$};
\draw (-4,0) ellipse (0.2 and 2);
\draw (-4,1) .. controls (-3.93,0) .. (-4,-1);
\draw (4,0) ellipse (0.2 and 2);
\draw (4,1) .. controls (4.07,0) .. (4,-1);
\draw[->] (-4,-2.2) -- (-2,-4);
\draw[->] (4,-2.2) -- (2,-4);
\draw (-4.4, 0) node {$T$};
\draw (3.5, 0) node {$T^\vee$};
\draw (-4, -3) node {$\phi$};
\draw (4, -3) node {$\phi^\vee$};
\draw[thick] (-2,-5) -- (-1, -4.5) -- (1, -5) -- (2, -6) -- (1.5, -6.5) -- (0, -7) -- (-2,-5);
\draw (-2,-5) -- (-1, -5.2) -- (-1, -4.5);
\draw (-1, -5.2) -- (0.6, -5.8) -- (1,-5);
\draw (0.6, -5.8) -- (2,-6);
\draw (0.6, -5.8) -- (0.6,-6) -- (1.5,-6.5);
\draw (0.6, -6) -- (0,-7);
\end{tikzpicture}
\end{figure}

\end{conjecture}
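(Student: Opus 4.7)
The SYZ conjecture as stated is a major open problem; what I outline here is not a proof but the route by which such a proof is expected to proceed, with the understanding that several of the intermediate steps remain conjectural in their own right. The first step is to produce the base $B$ together with the map $\phi : X \to B$. Rather than attempting to find a special Lagrangian fibration on a fixed smooth $X$ directly, I would work asymptotically: choose a one-parameter family $\mathcal{X} \to \DD^\ast$ approaching a maximally unipotent degeneration, rescale the Ricci-flat Kähler-Einstein metrics on the fibres so that their diameters stay bounded, and pass to the Gromov-Hausdorff limit. The expectation, following Kontsevich-Soibelman and Gross-Wilson, is that this limit is an affine manifold with singularities $B$ and that $\phi$ is realised as the collapsing map. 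For $K3$ surfaces the construction is made rigorous by a hyperKähler rotation: an elliptic fibration that is holomorphic in one complex structure of the $K3$ becomes a special Lagrangian fibration in the rotated complex structure. In higher dimension one would instead glue a model fibration on the smooth locus to local models near the discriminant and then perturb, which is the analytic core of the difficulty.

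The second step is to build $\check{X}$ together with $\check\phi$. Away from the discriminant $\Delta \subset B$ one has a smooth torus bundle $\phi^{-1}(B \setminus \Delta)$, and the semi-flat mirror is the dual torus bundle $T^\ast(B \setminus \Delta) / \Lambda^\vee$. The essential geometric content of SYZ is that this semi-flat mirror must be corrected by instanton contributions—counts of holomorphic discs in $X$ with boundary on Lagrangian fibres of $\phi$—in order to extend across $\Delta$. These corrections are encoded by wall-crossing automorphisms supported on walls in $B$, and the resulting complex manifold $\check{X}$ inherits a dual fibration $\check\phi$ whose fibres realise the instanton-corrected family of dual tori. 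Duality of the fibrations is then the statement that the Fukaya category of $\phi$ is equivalent to the derived category of $\check\phi$, and vice versa.

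The principal obstacle is twofold. Analytically, producing special Lagrangian fibrations in real dimension at least six has no general existence theory, and even the four-dimensional case is only tractable in the collapsing limit. Enumeratively, making the disc corrections rigorous is at the level of Fukaya-Oh-Ohta-Ono theory and remains the primary technical barrier. This is precisely why the remainder of this paper, following Gross-Siebert, inverts the problem: one postulates the affine base $B$ together with combinatorial wall-crossing data and reconstructs $\check{X}$ algebraically, thereby sidestepping the analytic content of the SYZ conjecture entirely while retaining its structural predictions.
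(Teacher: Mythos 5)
The statement you were asked about is a conjecture, and the paper offers no proof of it: it is stated purely as motivation, with the only concrete remark being that the toric case is tractable (base the moment polytope, fibration the moment map) before the paper pivots to the Gross--Siebert reconstruction, which is designed precisely to avoid ever producing the special Lagrangian fibration. Your proposal correctly recognises this -- you do not claim a proof, you survey the expected strategy (Gromov--Hausdorff collapse near a maximally unipotent degeneration, hyperK\"ahler rotation for $K3$s, semi-flat mirror plus instanton corrections), and you correctly identify that the paper's actual use of the conjecture is heuristic, with the scattering-diagram formalism standing in for the disc corrections. There is nothing to fault here, and nothing in the paper to compare against beyond the motivational role you have already described.
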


In the case of toric varieties this conjecture is tractible, the base is a polytope for the variety and the fibration the momentum map. This map degenerates over the boundary in a very controlled manner, showing that this fibration is certainly not trivial. Gross and Siebert suggested that even without knowing the exact form of the fibration one could still recover enough information to reconstruct the mirror family. To do this one must reincorporate information about the singularities via a scattering diagram.

The construction of~\cite{MirrorSymmetryforLogCY1} studies pairs $(S,D)$ where $S$ is a smooth rational surface and $D$ a cycle of rational curves (otherwise known as a \emph{Looijenga pair}). It states that the base for the fibration of should be the \emph{dual intersection complex of $D$} which we construct below. This is inspired by the toric case where this complex is the base of the momentum map. Of course this base need not embed into $\RR^2$, rather it has a singularity at the origin. This is precisely as expected from the SYZ picture and is where interesting geometry can enter the picture.

\begin{construction}

Let $D = D_1 \cup D_2 \cup \ldots \cup D_n$ be a cycle of rational curves on a smooth surface $S$ such that $D_i \cap D_j$ is a single point just when $i$ and $j$ differ by 1 mod $n$ and otherwise is empty (in the case $n=2$ we relax this to saying that there are two points in the unique intersection $D_1 \cap D_2$). Then $\Delta_{(S,D)}$ contains precisely one zero-dimensional cell 
$\{0\}$ corresponding to the interior $S \setminus D$. For each component $D_i$, $\Delta_{(S,D)}$
contains a one-dimensional cone with $v_i$ its primitive generator.
Attach the zero-dimensional cone as 0 inside each of these rays. Now introduce a two-dimensional cone in $\Delta_{(S,D)}$ for each intersection point of $D_i \cap D_j$, spanned by $v_i$ and $v_j$. This produces the \emph{dual intersection complex} $\Delta_{(S,D)}$ as a cone complex but it carries more structure. 
We write $B$ to be the underlying topological space of the cone complex
$\Delta_{(S,D)}$. It is homeomorphic to $\RR^2$. We give $B\setminus\{0\}$
an affine manifold structure by defining an affine coordinate chart by embedding
the union of the cones $\RR^{\geq 0} v_{i+1} \oplus \RR^{\geq 0} v_i $ and $\RR^{\geq 0} v_{i} \oplus \RR^{\geq 0} v_{i-1}$ into $\RR^2$ via the relations $v_{i-1} \mapsto (1,0)$, $v_i \mapsto (0,1)$ and $v_{i+1} \mapsto (-1,-D_i ^2)$. This expresses $\Delta_{(S,D)}$ not just as a complex of sets but an affine manifold with singularities (indeed a single singularity at the origin).

\end{construction}

Let us draw out in Figure 1 this complex, or rather its universal cover, in the case where $S$ is a rational elliptic surface and $D = D_1 \cup \ldots \cup D_4$ is a cycle of $-2$-curves, i.e. an $I_4$ fibre.

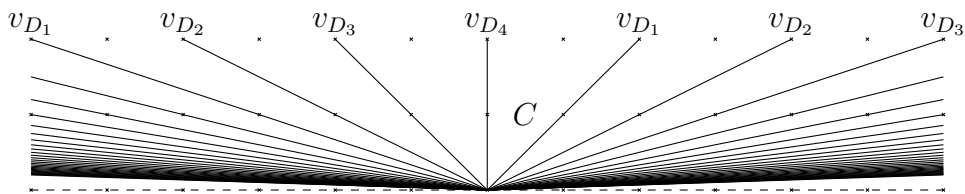
\begin{figure}[h]
\centering
\begin{tikzpicture}
\foreach \x in {-6,-5,-4,-3,-2,-1,0,1,2,3,4,5,6}
\foreach \y in {0,1,2}
{
\draw (\x+\crosssize,\y+\crosssize) -- (\x-\crosssize, \y-\crosssize);
\draw (\x+\crosssize,\y-\crosssize) -- (\x-\crosssize, \y+\crosssize);
}
\foreach \n in {-6,-4,-2,0,2,4,6}
{
\draw (0,0) -- (\n, 2);
}
\foreach \n in {8, 10, 12, 14, 16, 18, 20, 22, 24, 26,28,30,32,34,36,38,40,42,44,46,48,50,52,54,56,58,60}
{
\draw (0,0) -- (6, 12 / \n);
\draw (0,0) -- (-6, 12 / \n);
}
\node at (0.5,1) {$C$};
\node at (0,2.2) {$v_{D_4}$};
\node at (2,2.2) {$v_{D_1}$};
\node at (4,2.2) {$v_{D_2}$};
\node at (6,2.2) {$v_{D_3}$};
\draw [dashed] (0,0) -- (6,0);
\node at (-2,2.2) {$v_{D_3}$};
\node at (-4,2.2) {$v_{D_2}$};
\node at (-6,2.2) {$v_{D_1}$};
\draw [dashed] (0,0) -- (-6,0);
\end{tikzpicture}
\caption{The universal cover of the dual intersection complex for $S^\dagger$.}
\label{fig:DualIntersectionRES}
\end{figure}

In the toric case one can construct the mirror family via the Mumford degeneration. We give the general construction first, then apply it to our setting and then explain how to apply it to the toric case.

\begin{example}

  Let $B \subset N_\RR$ be a lattice polyhedron, $\mathcal{P}$ a lattice polyhedral decomposition of $B$ and $\phi: B \rightarrow \RR$ a strictly convex piecewise linear integral function, i.e. it is strictly convex between different cells of $\mathcal{P}$. One takes the graph over $\phi$ to produce a new polyhedron

\[\Gamma (B, \phi) := \{ (n, r) \in N_\RR \oplus \RR \mid n \in B, r \geq \phi (n)\}\]

This produces a lattice polyhedron unbounded in the positive direction on $\RR$. To construct a family from this we perform a cone construction, let $C (\Gamma)$ be the closure of the cone over $\Gamma (B, \phi)$:
\[
C(\gamma) = \overline{\{(n,r_1,r_2)\in N_{\RR}\oplus\RR\oplus\RR\,|\,
(n,r_1)\in r_2 \Gamma(B,\phi)\}}.
\]
 This carries an action of $\RR^+$ given by translating the second component of $\Gamma(B, \phi)$. The integral points of $C (\Gamma)$ form a graded monoid and we can take $Proj$ of this to produce a variety projective over $\AA^1$. We write this $\PP_{\Gamma (B,\phi)}$. The general fibre is a toric variety, 
whilst the fibre over the origin is a union of toric varieties.

\end{example}

Gross, Hacking and Keel modify this construction by incorporating singularities into the base. Outside the toric case we lack global coordinates for the Mumford construction. Instead they introduce a bundle with local coordinates.

\begin{definition}
\label{def:sheaf-of-monoids}
  Let $(S,D)$ be a Looijenga pair with associated dual intersection complex $\Delta_{(S,D)}$ and let $\eta: NE (S) \rightarrow M$ be a homomorphism of monoids. We want to construct a multi-valued piecewise linear function on $\Delta_{(S,D)}$ which will bend only along the one-cells. This will be a collection of
piecewise linear functions defined on open subsets of $B$ which differ by linear
functions on overlaps. Such functions are determined by their bends at
one-cells, which are encoded as follows. For a
one-cell $\tau=\RR^{\geq 0} v_i$, choose an orientation $\sigma_+$ and 
$\sigma_-$ of the two two-cells separated by $\tau$ and let $n_\tau$ be the unique primitive linear function positive on $\sigma_+$ and annihilating $\tau$. We want
to construct a representative $\phi_i$ for $\phi$ on $\sigma_+\cup \sigma_-$.  Writing $\phi_+$ and $\phi_-$ for the linear function defined by $\phi_i$ on $\sigma_+$ and $\sigma_-$, the function $\phi_i$ is then defined up to a linear
function by the requirement that
\[ \phi_+ - \phi _- = n_\tau \otimes \eta ([D_i])\]
Such a function is convex in the sense of~\cite{MirrorSymmetryforLogCY1} Definition 1.11, and one says that it is \emph{strictly convex} if $\eta([D_i])$ is not invertible for any $i$. 

This function determines a $M^{gp}\otimes\RR$-torsor $\PP$ as defined 
in~\cite{GrossSiebert}, Construction 1.14, on $B\setminus \{0\}$
which is trivial on each $(\sigma_+\cup\sigma_-)\setminus\{0\}$, i.e.,
is given by $((\sigma_+\cup\sigma_-)\setminus \{0\})\times (M^{gp}\otimes\RR)$.
These trivial torsors are glued on the overlap of two adjacent such sets,
namely on $\RR^{\ge 0} v_{i} \oplus\RR^{\ge 0}v_{i+1}$, via the map
\[ (x,p) \mapsto (x, p + \phi_{i+1} (x) - \phi_i (x))\]
which induce isomorphisms on the monoid of points lying above $\phi$.
This construction is designed to allow us to run the Mumford construction locally, even if we cannot run it globally. 
Write
$\pi: \PP \rightarrow B\setminus\{0\}$, and we write $\mathcal{M}$ for the 
sheaf $\pi_* (\Lambda _{\PP})$, bearing in mind that $\PP$ also has the
structure of an affine manifold. Then there is a canonical exact sequence
  \[ 0 \rightarrow \underline{M^{gp}} \rightarrow \mathcal{M} \xrightarrow{r} \Lambda_{B} \rightarrow 0 \]
  We write $r$ for the second map in this sequence. This will not be mentioned again until we define the canonical scattering diagram so keep this in mind until then.
Furthermore, \cite{GrossHackingSiebert}, Definition 1.16 gives a subsheaf
of monoids $\mathcal{M}^+\subset \mathcal{M}$.
 
\end{definition}

If one performs this with a toric variety relative to its boundary and pairs the function $\phi$ with an ample class then one obtains a height function for the Mumford degeneration. In particular the general fibre of this family will be isomorphic to the original toric variety. At the moment this lacks any data about the singularities of the SYZ fibration. Without this data we cannot hope to correctly recreate the mirror. Let us recall how the singularities appear.

The authors in~\cite{MirrorSymmetryforLogCY1} claim that the mirror should be affine, the spectrum of the symplectic cohomology of $(S,D)$. This is not explicated there, and indeed is not true in general. Daniel Pomerleano has written down three dimensional examples whose ``mirror family'' is generically not smooth, one must pass to a resolution of this family to obtain the actual mirror family. In any case we accept this and ask how to describe the symplectic cohomology.

By definition this ring counts Maslov index two disks with boundaries lying on Lagrangians. Furthermore given a Maslov index zero disk we may glue it to a standard index two disk. Now the image of one of these index two disks under the fibration is a topological space retracting onto a piecewise linear skeleton. This skeleton is a balanced piecewise linear graph on $B$ with three legs, one ending on the central singularity and two more passing off to infinity. The linear structure breaks down near points where a Maslov index zero disk attaches. This process goes by the name \emph{tropicalisation}.

Gross, Hacking and Keel take this definition and study all possible trivalent tropical curves, piecewise linear balanced graphs, on $B$. They attempt to reformulate the Maslov index zero disks algebraically as a scattering diagram on $B$.

\begin{definition}

  Let $B$ be an affine manifold with a single singularity, with 
$B$ homeomorphic to $\RR^2$ with singularity at the origin, so that $B^*:=
B\setminus \{0\}$ is an affine manifold.
Let $\mathcal{M}$ be a locally constant sheaf of abelian groups
on $B^*$ with a subsheaf of monoids $\mathcal{M}^+\subset\mathcal{M}$
and equipped with a map $r:\mathcal{M} \rightarrow \Lambda_B$.
Let $\mathcal{J}$ be a sheaf of ideals in $\mathcal{M}^+$ with stalk $\mathcal{J}_x$ maximal in $\mathcal{M}^+_x$ for all $x \in B^*$.
Let
$\mathcal{R}$ denote the sheaf of rings locally given by the completion of 
$k[\mathcal{M}^+]$ at $\mathcal{J}$. 
 A \emph{scattering diagram with values in the pair $(\mathcal{M}, \mathcal{J})$ on $B$} is a function $f$ which assigns to each rational ray from the origin an section of the restriction of $\mathcal{R}$ to the ray.
We require the following properties of this function:

  \begin{itemize}
    \item For each $\mathfrak{d}$ one has $f(\mathfrak{d}) = 1$ mod 
$\mathcal{J}|_{\mathfrak{d}}$.
    \item For each $n$ there are only finitely many $\mathfrak{d}$ for which $f(\mathfrak{d})$ is not congruent to $1$ mod $\mathcal{J}|_{\mathfrak{d}}^n$. These $\mathfrak{d}$ are called \emph{walls}.
    \item For each ray $\mathfrak{d}$ and for each monomial $z^{p}$ appearing in $f(\mathfrak{d})$ one has $r(p)$ tangent to $\mathfrak{d}$. A line for which $r(p)$ is a positive generator of $\mathfrak{d}$ for all $p$ with $c_p\not=0$
 is called an \emph{incoming ray}. If instead
$r(p)$ is a negative generator of $\mathfrak{d}$ for all $p$ with $c_p\not=0$,
it is called an \emph{outgoing ray}.
  \end{itemize}
  
  We denote such an object by the tuple $(B, f, \mathcal{M}, \mathcal{J})$. We say that $(B,f,\mathcal{M},\mathcal{J})$ is obtained from $(B,f',\mathcal{M},\mathcal{J})$ by \emph{adding outgoing rays} if for each ray $\mathfrak{d}$ one can write $f(\mathfrak{d}) = f' (\mathfrak{d}) (1 + \sum (c_p z^p))$ where for each monoid element $p$ with $c_p\not=0$ the vector $-r(p)$ is a generator of $\mathfrak{d}$.

\end{definition}

In our case the sheaf of monoids $\mathcal{M}^+$ will be as given in
Definition \ref{def:sheaf-of-monoids}, with the monoid $M$ being
 a finitely generated sharp submonoid of $H_2(S,\ZZ)$
containing $NE(S)$, the monoid generated by effective curves on $S$. Being sharp means the only invertible element of $M$ is the identity element, and so the maximal ideal is just the complement of the identity. We introduce the choice of scattering diagram, the \Lawrence{\emph{canonical scattering diagram},} and then motivate and define each term appearing. A ray $\mathfrak{d} = (a v_i + b v_{i+1}) \RR^{\geq 0}$ specifies a blow up of $S$ given by refining the fan until $\mathfrak{d}$ is a one-cell and all the two-cells are integrally isomorphic to $(\RR^{\geq 0})^2$. The ray $\mathfrak{d}$ then corresponds to a component $C$ in the inverse image of $D$. To this ray then we assign the power series
\[f(\mathfrak{d}) := \exp \left[ \sum_\beta k_{\beta} N_\beta z^{\eta (\pi_* (\beta)) - k_\beta m_{\mathfrak{d}}'}\right],\]
where $m_{\mathfrak{d}}'$ is the unique lift of a primitive outward
pointing tangent vector $m_{\mathfrak{d}}$ to $\mathfrak{d}$ not contained
in the relative interior of $\mathcal{M}|_{\mathfrak{d}}$.
The number $N_{\beta}$ counts the number of relative curves mapping to $(S,D)$ tangent to $C$ to maximal order $k_\beta$ at a single point as outlined in~\cite{MirrorSymmetryforLogCY1} at the beginning of section 3.
 By the Riemann-Roch formula this is of dimension
\[ dim \: S - 3 - K_S. \beta - k_\beta + 1 = 0.\]
and applying~\cite{GromovWittenInvariantsInAlgebraicGeometry} this produces a Gromov-Witten invariant. This construction may also be run using logarithmic Gromov-Witten invariants rather than these blow ups, see~\cite{LogarithmicGromovWittenInvariants} for the definition, and we take this approach later, defining the relevant theory.

The tropicalisation of a Maslov index two disk is trivalent and therefore we could study each leg separately. A broken line formalises one leg of this picture, the full picture arises once we glue two of these together

\begin{definition}

  A \emph{broken line from $v\in B(\ZZ)$ to $P \in B$} on a scattering diagram $(B, f, R, J)$ is a choice of piecewise linear function $l: \RR^{\leq 0} \rightarrow B$ and a map $m: \RR^{\leq 0} \rightarrow \prod_{t\in\RR^{\leq 0}}
\mathcal{R}_{l(t)}$ such that the following hold:

  \begin{itemize}
  \item $l$ has only finitely many points where it is non-linear and these only occur where $l$ maps into a rational ray of $B$.
  \item $l (0) = P$.
  \item $l(t)$ lies in the same cone as $v$ and is parallel to $v$ for all $t$ sufficiently negative.
  \item $l$ does not map an interval to a ray through the origin.
    \item $m(t)\in \mathcal{R}_{l(t)}$ for all $t$ and is a monomial in
this ring, written as $c_t z^{m_t}$. Further, on each domain of linearity
of $l$, $m(t)$ is given by a section of $\mathcal{R}$ pulled back to 
this domain of linearity.
  \item For $t$ very negative, $m_t\in \mathcal{M}^+_{l(t)}$ is the
unique element not lying in the interior of this monoid satisfying
$r(m_t) = v$.
  \item $r(m(t_0)) = -{ \partial l \over \partial t}\big|_{t=t_0}$ wherever $l$ is linear.
  \item $m$ only changes at those points where $l$ is non-linear.
  \item Let $t \in \RR_{\leq 0}$ be a point where $l$ is non-linear. We write $\partial(l_+)$, $\partial(l_-)$, $m_+$ and $m_-$ for the values of $\partial l \Lawrence{/} \partial t$ and $m$ on either side of $t$. Suppose that $l(t)$ lies on a ray $\mathfrak{d}$ with primitive normal vector $n_{\mathfrak{d}}$, negative on $\partial (l_-)$. Then $m_+$ is a monomial term of $m_- f_{\mathfrak{d}}^{\langle n_{\mathfrak{d}}, \partial (l_-)\rangle}$.
\end{itemize}

\end{definition}

The key data we will need is the count of pairs of pants, which are expressed in terms of broken lines as pairs of broken lines $(l_P, m_P)$  and $(l_Q, m_Q)$ from $P$ and $Q$ respectively to an irrational point near to $R$ such that 
$(\partial l_P/\partial t)|_{t=0} + (\partial l_Q/\partial t)_{t=0} + R=0$. Let $T_{P,Q \rightarrow R}$ denote the set of such pairs.

Now to construct an algebraic version of the symplectic cohomology we follow~\cite{CanonicalBasesForClusterAlgebras} and for each integral point $P$ in $B$ introduce a symbol $\vartheta_P$. As a $k$-vector space the ring $QH (\check{W})$ is freely generated by the $\vartheta_P$. We take the content of~\cite{MirrorSymmetryforLogCY1} Theorem 2.34 as the definition of the product of $\vartheta_P$ and $\vartheta_Q$, so this product is equal to

\[ \sum_{R} \sum_{((l_P, m_P), (l_Q, m_Q)) \in T_{P,Q \rightarrow R}} m_P (0) m_Q (0) \vartheta_R\]

This is analogous to the approach of ~\cite{GrossHackingSiebert}. A key result of ~\cite{MirrorSymmetryforLogCY1} is that for fixed order $J^n$ and for a consistent scattering diagram in the sense of
~\cite{MirrorSymmetryforLogCY1}, Definition 2.26, this does not depend on the choice of irrational point near $R$. This consistency property will hold
in particular for the canonical scattering diagram.
This sum need not terminate and indeed will in general only produce a power series. However, in the case that $D$ supports an ample divisor, this power series will in fact be a polynomial.

Now let us forget everything about the scattering diagram and suppose that it is as bad as possible, so it has rays in every direction. We can use the integral structure to limit the possible terms appearing in the product. Let us explain how to apply this idea in the case of a rational elliptic surface.

\begin{example}

  We apply the balancing condition to limit the bends which can occur. Consider the $y$-coordinate of a point of $B$, $y(P)$. This extends to the tangent bundle of $B$ and we write $y(v)$ for this extension. Importantly for us $y(-)$ is positive on points of $B$, and given vectors $v_i$ with $\sum v_i = 0$ we have $\sum y(v_i) = 0$. Let $(l,m)$ be a broken line, we claim that $E(\partial l / \partial t)$ is an increasing function on the linear components of $l$. At points where $l$ is non-linear by definition the tangent vector changes by a positive generator of the ray $\mathfrak{d}$, and $E$ is positive on such a generator. Similarly if $l$ crosses from one maximal cell to another then the convexity of $y$ shows that $y(\partial l / \partial t)$ increases.

  Now suppose that $(l_1, m_1)$ and $(l_2, m_2)$ are two broken lines from $v_1$ and $v_2$ respectively combining to form a pair of pants at $P$. By definition we have an equality:

  \[ y(\partial l_1/\partial t)|_{t=0} + y(\partial l_2/\partial t)|_{t=0} + y(P) = 0 \]

  Since $y(P)$ is non-negative and $-y(v_i)\le y((\partial l_i/\partial t)|_{t=0})$ we see that $y(v_1)+y(v_2)$ determines how many times such broken lines can bend. By restricting to the case where $F_0$ is an $I_4$ fibre we need only consider products with $y(v_i) = 1$, and so the only possible pairs of pants which can appear are the following:

  \begin{enumerate}

  \item Pairs where neither broken line bends.
  \item Pairs where one broken line bends along a monomial $m$ with $y(r(m)) = 1$.
  \item Pairs where one broken line bends along a monomial $m$ with $y(r(m)) = 2$.
  \item Pairs where both broken line bends along monomials $m_i$ with $y(r(m_i)) = 1$.
  \item Pairs where one broken line bends twice along monomials $m_i$ with $y(r(m_i)) = 1$.
    
  \end{enumerate}

Now $y(r(m))$ describes the order to which the corresponding curve meets the boundary, so we see that we must the calculate the number of curves tangent to the boundary at a single point to order one or two.
  
\end{example}

This also tells us the general form of the equations defining the mirror:

\begin{align}
  \vartheta_{D_1} \vartheta_{D_3} & = f_{(2,2)} \vartheta_{2D_2} + f_{(6,2)} \vartheta_{2D_4} + \sum f_{(i,1)} \vartheta_{D_i} + f_0   \label{eqn:MirrorToRES1} \\
  \vartheta_{D_2} \vartheta_{D_4} & = g_{(0,2)} \vartheta_{2D_1} + g_{(4,2)} \vartheta_{2D_3} + \sum g_{(i,1)} \vartheta_{D_i} + g_0 \label{eqn:MirrorToRES2}\\
  \vartheta_{D_i}^2 & = (1+r^{i}_{(i,2)}) \vartheta_{2D_i} + r^{i}_{(i+2,2)} \vartheta_{2D_{i+2}} + \sum r^{i}_{(i,1)} \vartheta_{D_i} + r^i_0\label{eqn:MirrorToRES3}
\end{align}
with $r^{i}_{(i,2)}$ and $r^{i}_{(i+2,2)}$ vanishing on the central fibre. The functions $f_{(2,2)}$, $f_{(6,2)}$, $g_{(0,2)}$, $g_{(4,2)}$, $r^{i}_{(i,2)}$ and $r^{i}_{(i+2,2)}$ correspond to pairs of broken lines not bending. The terms $f_{(i,1)}$, $g_{(i,1)}$ and $r_{i,1}^i$ come from pairs of broken lines where one of the lines bends once off of a ray with primitive $(k,1)$. The remaining terms correspond to pairs of broken lines bending in three different ways, either one broken line bends off of a ray with primitive $(k,2)$, both broken lines bends off of rays with primitives $(k,1)$ and $(k',1)$ or one of the broken lines bends twice, off of rays with primitives $(k,1)$ and $(k',1)$. Therefore from these we can extract two equations for the primitive $\vartheta$ functions and thus an embedding of the mirror family into affine space.

Whilst the original paper~\cite{MirrorSymmetryforLogCY1} used relative invariants we here use log Gromov-Witten invariants. The compatibility results of~\cite{ComparisonTheoremsForGromovWittenInvariantsOfSmoothPairsAndOfDegenerations} show that these two approaches agree. We recall the basic definitions of the subject.

  \begin{definition}

  A \emph{pre-log scheme} $X$ is a pair $(\underline{X}, \alpha_X: \mathcal{M}_X \rightarrow \mathcal{O}_{\underline{X}})$ of a scheme $\underline{X}$ (the \emph{underlying scheme}), a sheaf of monoids on $\underline{X}$ in the \'etale topology $\mathcal{M}_X$ and a homomorphism $\alpha_X$ of sheaves of monoids from $\mathcal{M}_X$ to the multiplicative monoid $\mathcal{O}_{\underline{X}}$. Saying that this is a \emph{log scheme} means that the restriction of $\alpha_X$ to the inverse image of $\mathcal{O}_{\underline{X}}^\times$ is an isomorphism.

  To any pre-log scheme there is an associated log structure given by taking the amalgamated sum

  \[ \mathcal{M}_X \oplus_{\alpha^{-1} \mathcal{O}_{\underline{X}}^{\times}} \mathcal{O}_{\underline{X}}^\times\]

  A morphism of log schemes $\phi : X \rightarrow Y$ is a pair $(\underline{\phi}, \phi^\#)$ with $\underline{\phi}: \underline{X} \rightarrow \underline{Y}$ a morphism of schemes and $\phi^\#$ fitting into a commutative diagram
\[    \begin{tikzcd}
      \underline{\phi} ^{-1} \mathcal{M}_Y \arrow{r}{\phi^\#} \arrow{d} & \mathcal{M}_X \arrow{d}\\
      \underline{\phi}^{-1} \mathcal{O}_{\underline{Y}} \arrow{r}{\phi^*} & \mathcal{O}_{\underline{X}}
    \end{tikzcd} \]

  Given a morphism of schemes $\underline{\phi}: \underline {X} \rightarrow \underline{Y}$ and the structure of a log scheme on $\underline{Y}$ the inverse image sheaf $\underline{\phi}^{-1} (\mathcal{M}_Y)$ on $X$ is naturally a pre-log structure. We call the associated log structure the \emph{pull-back log structure},
written $\phi^*\mathcal{M}_Y$. A morphism of log schemes $\phi: X \rightarrow Y$ is \emph{strict} if the natural
map $\phi^\#:\phi^*{\mathcal{M}_Y} \rightarrow \mathcal{M}_X$ is an isomorphism.

There is an interesting invariant of a log scheme which contains the combinatorial data not seen by classical geometry. The \emph{ghost} or \emph{characteristic sheaf} is the quotient $\mathcal{M}_X/\mathcal{O}_X^\times$.
  
\end{definition}

We introduced log structures to study pairs $(X,D)$, so we now explain how to construct a log structure from such a pair.

\begin{example}

  Let $\underline{X}$ be a smooth scheme and $\underline{D}$ be a simple normal crossings divisor. Write $\underline{U}$ for the complement of $\underline{D}$ in $\underline{X}$ and let $j$ be the inclusion of $\underline{U}$ into $\underline{X}$. Let $\mathcal{M}$ be the sheaf $j_* \mathcal{O}_{\underline{U}}^* \cap \mathcal{O}_{\underline{X}}$, and $\alpha_X$ the canonical inclusion. This defines a log scheme $X$, the \emph{divisorial log structure} on $\underline{X}$.

\end{example}

Importantly there is a moduli space of basic log stable maps to a target $X$ with a number of marked points and tangency data. The relative obstruction theory is given by the log cotangent bundle in the same way the usual obstruction theory is induced by the usual cotangent bundle.

\begin{definition}

  Let $f: X \rightarrow Y$ be a morphism of log schemes. A log derivation is a pair of maps $D, Dlog$ from $\mathcal{O}_{\underline{X}}$ or $\mathcal{M}_X$ respectively to a sheaf $\mathcal{E}$ satisfying:

  \begin{enumerate}
  \item $D$ is a derivation in the classical sense.
  \item $Dlog$ vanishes on sections of $f^{-1} (\mathcal{\mathcal{M}}_Y)$.
  \item For every element $m \in \mathcal{M}_X (U)$ one has an equality $D (\alpha_X(m)) = \alpha_X (m) Dlog(m)$.
  \end{enumerate}

\end{definition}

There is a universal sheaf classifying these, the logarithmic cotangent bundle $\Omega^{1, \dagger}_{X/Y}$. There is a natural pullback map, given $f:X \rightarrow Y$ and $g: Y \rightarrow Z$ there is a canonical morphism $f^* \Omega^{1, \dagger}_{Y/Z} \rightarrow \Omega^{1,\dagger}_{X/Z}$. We write $\omega^\dagger_{X/Y}$ for the top tensor power in the case where $\Omega^{1, \dagger}_{X/Y}$ is a vector bundle.

\section{Sections of a rational elliptic surface}

Bryan and Leung in~\cite{TheEnumerativeGeometryOfK3SurfacesAndModularForms} studied the Gromov-Witten theory of an elliptically fibred $K3$ surface with a section $E$ and fibre $F$. They considered genus $g$ stable maps with $g$ marked points in the class $E + nF$ and proved a formula for the Gromov-Witten invariants. For such a surface one cannot use the Gromov-Witten theory of $K$, but rather the reduced Gromov-Witten theory. This is defined to be the virtual class defined by $T^\circ$ in the following exact triangle
  \[ \tau_{\geq -1} R \pi _* \omega_{\pi} \otimes H^0 (K, \omega_K) \rightarrow R \pi_* f^* \Theta_{K/k} \rightarrow T^\circ \rightarrow\]
  In the paper Bryan and Leung constructed explicitly the moduli space of genus zero stable maps and reinterpreted it as the moduli space of maps on certain blow ups of $\PP^2$. In particular in section 5 they showed that any stable map in this moduli space is a union of a component mapping to $E$ and some combinatorial data describing a cover of different rational fibres. From this description it is clear that even infinitesimally there are no deformations smoothing the nodes between the component mapping to $E$ and those mapping to rational fibres.

  By explicit calculation they show that the two induced deformation theories on this moduli space of stable maps are quasi-isomorphic. Then in Lemma 5.7 the authors apply well known formulae for the Gromov-Witten theory of blow ups of $\PP^2$ to show that each component of the moduli space contributes either a one or a zero to the total count.

  They also performed the same analysis, deducing the same geometric results for the moduli space of stable maps to a rational elliptic surface $S$. In particular there is no infinitesimal smoothing of any of the nodes lying on the intersection of the component mapping to the section and those mapping to the fibres. In this case there was no need to work with reduced invariants either and in Theorem 6.2 they provide the following formula for $I_{0,0,E+nF}$, the number of unmarked rational curves in class $E+nF$:
\begin{equation}
\sum_{n=0}^\infty I_{0,0,E + mF} z^m = \prod _{m=1}^\infty (1- z^m)^{-12}
\end{equation}

But we also need to know that there are not too many classes of sections of a given degree on $S$. Once we combine these two results we will produce a bound on the total number of sections. Thankfully bounding the number of curves is an easy exercise in classical algebraic geometry:

\begin{lemma}[Bounds on the number of sections]

Let $d \in \mathbb{N}$, define the \emph{Goldilocks zone} 
\[
GZ(S, d) = \{ [C] \in A_1 (S) \mid [C] \sim dH - \sum a_i E_i, \:  [C].[F] = 1, \: p_a([C]) \geq 0 \}.
\]
\Lawrence{where $p_a([C])$ is the arithmetic genus of a generic member of this family.} Then $| GZ (S,d) | \leq (N\sqrt{d} + N)^{9}$ for some $N$ independent of $d$.

\label{thm:BoundedlyManySections}

\end{lemma}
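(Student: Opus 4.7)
The plan is to turn the two hypotheses $[C].[F] = 1$ and $p_a([C])\ge 0$ into a linear and a quadratic constraint on the coefficients $a_i$, recentre to kill the linear constraint, and observe that each centred coordinate must then be of size $O(\sqrt d)$.

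First I would use that on a rational elliptic surface the fibre is anticanonical, so $[F] = -K_S = 3H - \sum_{i=1}^9 E_i$. Intersecting $[C] = dH - \sum a_i E_i$ with $[F]$ via $H^2 = 1$, $H.E_i = 0$, $E_i.E_j = -\delta_{ij}$, the condition $[C].[F] = 1$ becomes $\sum_i a_i = 3d - 1$. Adjunction then gives $2 p_a([C]) - 2 = [C]^2 + [C].K_S = (d^2 - \sum a_i^2) - 1$, so $p_a([C]) \ge 0$ is equivalent to $\sum_i a_i^2 \le d^2 + 1$.

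Setting $b_i = a_i - (3d-1)/9$ removes the linear constraint: $\sum_i b_i = 0$ and
\[ \sum_i b_i^2 \;=\; \sum_i a_i^2 - \frac{(3d-1)^2}{9} \;\le\; d^2 + 1 - \frac{(3d-1)^2}{9} \;=\; \frac{6d+8}{9}. \]
In particular $|b_i| \le \sqrt{(6d+8)/9}$ for each $i$, so each $a_i$ is an integer in a real interval of length $2\sqrt{(6d+8)/9}$ centred at $(3d-1)/9$. The number of integers in such an interval is at most $2\sqrt{(6d+8)/9}+1$, which is bounded by $N\sqrt d + N$ for a suitable absolute constant $N$; taking the product over the nine indices gives the claimed bound $|GZ(S,d)| \le (N\sqrt d + N)^9$.

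No step presents any real difficulty once one has written out the adjunction computation; I would only remark that the linear constraint $\sum a_i = 3d-1$ actually cuts the exponent down to $8$, but since the statement only asks for the weaker bound with exponent $9$, this refinement is not needed.
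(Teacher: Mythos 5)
Your proof is correct and follows essentially the same route as the paper's: both derive the linear constraint $\sum a_i = 3d-1$ from $[C].[F]=1$, the quadratic constraint $\sum a_i^2 = O(d^2)$ from adjunction, and then bound the deviation of each $a_i$ from the mean $(3d-1)/9$ by $O(\sqrt d)$. Your recentring to $b_i = a_i - (3d-1)/9$ and direct use of $\sum b_i^2 \le (6d+8)/9$ is a cleaner execution of the deviation bound that the paper obtains by a coordinate-by-coordinate minimisation, but the underlying argument is the same.
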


\begin{proof}
Let $f: C \rightarrow S$ be a stable map. There is an inequality $g(C) \leq p_a (f_* [C])$ and so this counts the number of possible curve classes for stable maps of genus zero. To prove \Lawrence{this lemma} we apply the genus formula together with basic intersection theory. Let $[C] = d[H] - \sum a_i [E_i]$ be a curve class with $[C].[K_S] = -1$, so
$3d-\sum a_i=1$. Then the genus formula then states that $p_a(C) = 1 + 1/2([C].[C]-[C].[F])$ and
$p_g(C)\le p_a(C)$, so
\begin{align*}
0 \leq \; & 1 + 1/2([C].[C]-[C].[F])\\
\leq \; & 3/2 + 1/2 [C].[C]
\end{align*}
hence
\begin{align}
\begin{split}
0 \leq \; & 3 + [C].[C]\\
\leq \; & 3 + d^2 - \sum a_i^2
\end{split}
\label{inequality}
\end{align} 
By the arithmetic-geometric inequality $\sum a_i^2$ is minimised when all the $a_i$ are equal. We need to find a bound for $|a_i-a_j|$ in terms of $d$ given
the inequality above. Thus suppose
that $a_1 + k = 1/8 \sum_2^9 a_i$. We will bound $k$. Again $\sum a_i^2$ is minimised when $a_2 = a_3 = \cdots = a_9$. Using $3d = \sum a_i + 1$ we can write out everything explicitly
\begin{align*}
a_1 & = \frac{3d-1-8k}{9},\\
a_2 = a_3 = \ldots & = \frac{3d - 1 + k}{9}.
\end{align*}
Thus \Lawrence{\eqref{inequality}} implies
\begin{align*}
3 + d^2 & \geq \sum a_i^2\\
& \geq \left(\frac{3d - 1 -8k}{9}\right)^2 + 8 \left(\frac{3d - 1 + k}{9}\right)^2\\
& \geq d^2 - \frac{2}{3} d + \frac{8}{9} k^2 + \frac{1}{9},
\end{align*}
and hence
\[
\frac{26}{9} + \frac{2}{3}d  \geq \frac{8}{9}k^2
\]
or
\[
\frac{13}{4} + \frac{3d}{4}  \geq k^2
\]
Thus we see that for $|k| > \sqrt d + 2$ there can be no classes satisfying
the above inequality. By symmetry we may assume that $a_1$ is the smallest of the $a_i$. This then shows that none of the $a_i$ can be more than $\sqrt{d} + 2$ from $(3d-1)/9$, thus the total possible combinations is bounded by a bound of the above form. 

\end{proof}

In particular the number of rational curves of degree at most $d$ grows slower than $2^d$.

\section{The unravelled threefold}

The idea for this section is to make use of the fibration $\rho$. We will take a family of double covers of $\PP^1$ and construct a threefold $X$ fibred by $K3$ surfaces with known singularities. Each fibre will be a double cover of $S$ and so possesses an elliptic fibration. We will show that this induces a strong relation between the moduli spaces of stable maps to $S$ and the moduli space of stable maps to $X$. Let us begin though by thinking about the geometric content properties of the curves we wish to count.

Let $C$ be an irreducible rational curve on $S$ with $C.F = 2$. Restricting the fibration $\rho$ to $C$ we therefore obtain a double cover $\PP^1 \rightarrow \PP^1$. By the Riemann-Hurwitz formula this map must ramify at two points and up to a choice of involution these two points uniquely specify the double cover. This suggests that there should be a map from the moduli space of stable maps in the class $[C]$ to the Hilbert scheme of two points on $\PP^1$.

Let $[C]$ be an effective curve class in $A_1 (S, \mathbb{Z})$ with $[C].[F] = 2$. We will denote the moduli space of stable rational unmarked maps to $S$ with image in class $[C]$ by $\StableModuli_{0,0} (S, [C])$. This moduli space carries a universal curve, $\pi: \mathcal{C}_{0,0} (S,[C]) \rightarrow \StableModuli_{0,0} (S, [C])$, and a universal stable map, $f: \mathcal{C}_{0,0} (S,[C]) \rightarrow S$. Taking the composition $\rho\circ f$ we obtain a family of branched double covers of $\mathbb{P}^1$. Passing to the stabilisation we therefore obtain a morphism $\StableModuli_{0,0} (S, [C]) \rightarrow \MTwoOne$. The space $\StableModuli_{0,0} (\mathbb{P}^1, 2)$ meanwhile is naturally isomorphic to $[\Hilb / C_2]$. Here the $C_2$ action on $\Hilb$ is trivial, reflecting the involution on a double cover of $\mathbb{P}^1$. Any stable map in $\MTwoOne$ is a double cover of $\mathbb{P}^1$ by a rational curve hence ramified over two points. The comparison map to $\Hilb$ sends such a curve to the two ramified points. Therefore via composition we obtain a morphism $ram: \StableModuli_{0,0} (S, [C]) \rightarrow [\Hilb/C_2]$.

\begin{figure}[h]
  \centering
  \begin{tikzpicture}
    \draw (-4, 1) ellipse (1.5cm and 0.5cm);
    \draw (-6, 1.5) .. controls (-5.335,1) .. (-6, 0.5);
    \draw (-2, 1.5) .. controls (-2.665,1) .. (-2, 0.5);
    \draw (-3, 1.2) -- (-3, 2);
    \draw [->] (-4, 0) -- (-4, -0.5);
    \draw (-6, -1) -- (-2, -1);
    \filldraw (-5.5, -1) circle (2pt);
    \filldraw (-2.5, -1) circle (2pt);
    \draw (-5.5, -1.5) node {$P$};
    \draw [dotted] (-5.5, 1) -- (-5.5, -1);
    \draw [dotted] (-2.5, 1) -- (-2.5, -1);
    \draw (-2.5, -1.5) node {$Q$};
    \draw [->](-0.5, 0) -- (0.5, 0);
    \draw (0,0.5) node {$ram$};
    \draw (4, 0) circle (2);
    \draw (7.3, 0) node {$\Hilb$};
    \filldraw (3.5, 0.5) circle (2pt);
    \draw (3.5, 0.2) node {$\{P,Q\}$};
    \end{tikzpicture}
  
  \end{figure}

Our goal will be to understand the images of different irreducible components of the moduli space $\StableModuli_{0,0} (S, [C])$ under this map $ram$. To do this we wish to understand how the fibres of $\pi$ vary as double covers of $\PP^1$. This requires that we be able to identify which components contribute to the double cover for which we label the different components as follows. Let $f:C \rightarrow S$ be a stable map corresponding to a closed point of $\StableModuli_{0,0} (S, [C])$. There are three types of components of $C$: contracted components, components covering fibres of $\rho$ and components which cover the base $\mathbb{P}^1$ of the fibration. We will assign one of the two labels $\fib$ and $\tr$ to each component. Components covering fibres will be labelled with $\fib$ whilst components covering the base will be labelled $\tr$. Any contracted component which is part of a chain linking two components covering the base will be labelled $\tr$ and all the remaining contracted components will be labelled $\fib$. This divides $C$ into two types of components based on these labels, which we will denote by $C_{\tr}$ and $C_{\fib}$. The component $C_{\tr}$ consists either of a single component mapping as a double cover to the base $\mathbb{P}^1$ or as a chain of contracted components joining two components mapping isomorphically to the base or is the union of two disconnected components. Note that $C_{\fib}$ may very well be disconnected. We write $C_{\contr}$ for the curve obtained from $C_{\tr}$ by contracting out the contracted components, noting that if $C_{tr}$ is connected then $C_{contr}$ admits a map to $S$. The following lemma shows that in a fibre of $ram \circ \pi$ the component $C_{\tr}$ is rigid, only the component $C_{\fib}$ deforms.

\begin{theorem}[Fibres of $ram\circ \pi$ vary]
\label{thm:varying-fibres}
The component $f(C_{\tr})$ is locally constant as a subscheme of
$S$ along fibres of $ram\circ\pi$. Hence 
the image under $ram\circ \pi$ of a family of curves in $\StableModuli_{0,0} (S, [C])$ with $f(C_{\tr})$ varying as a subscheme of $S$ is positive dimensional.

\end{theorem}

\begin{proof}

Let $B \xleftarrow{\pi} C \xrightarrow{f} S$ be a family of genus zero unmarked stable maps contained inside a fibre of $ram$. First we treat the case where $C_{\tr}$ is irreducible.

Fix a choice of section $E$ for the rational elliptic surface which we take to be the identity for the group law. Choose an \'etale $U\rightarrow B$ with dense
image such that $(C|_U)_{\tr}$ is isomorphic to $U \times \mathbb{P}^1$. 
Thus we may identify fibres over each $u \in U$ as follows, using the assumption
that $C_{tr}$ is irreducible. After passing to another \'etale cover we may identify $(C|U)_{tr}$ with the pull-back of the universal curve over $\MTwoOne$. By assumption the image in $\MTwoOne$ is a point $P$. Therefore for each $u \in U$ we may identify the curve $C_{u,tr}$ with the same double cover of $\PP^1$.

\begin{figure}
\centering
\begin{tikzpicture}
\node (P1) at (0,2) {$\PP^1$};
\node (P2) at (0,-2) {$\PP^1$};
\node (P3) at (2,3) {$\PP^1$};
\node (P4) at (2,0) {$S$};
\node (P5) at (2,-3) {$\PP^1$};
\node (P6) at (-2,3) {$\PP^1$};
\node (P7) at (-2,0) {$S$};
\node (P8) at (-2,-3) {$\PP^1$};
\draw [->] (P1) -- (P2) node[midway, left] {$2:1$};
\draw [->] (P1) -- (P3) node[midway, rotate=27] {$\cong$};
\draw [->] (P3) -- (P4) node[midway, left] {$u$};
\draw [->] (P4) -- (P5) node[midway, left] {$\rho$};
\draw [->] (P1) -- (P6) node[midway, rotate=-27] {$\cong$};
\draw [->] (P6) -- (P7) node[midway, left] {$v$};
\draw [->] (P7) -- (P8) node[midway, left] {$\rho$};
\draw [->] (P2) -- (P5) node[midway, rotate=-27] {$\cong$};
\draw [->] (P2) -- (P8) node[midway, rotate=27] {$\cong$};
\end{tikzpicture}
\caption{}
\label{identificationoffibres}
\end{figure}

Fix a choice of closed point $u \in U$ and so a stable map to $S$ and for each $v \in U$ we identify the two covers via the above isomorphism as described in Figure~\ref{identificationoffibres} and consider $f(C_{tr,v}) - f(C_{tr, u})$. For $u = v$ this defines a double cover of the section $E$, while if $f(C_{\tr})$ is varying then for $u \neq v$ this is not a double cover of that section. By rigidity of sections this does not occur.

Now if $C_{\tr}$ is reducible, $C_{\contr} = C_1 \cup C_2$ then both $C_1$ and $C_2$ are sections and admit no deformations. In particular this shows that $f$ is constant on these fibres.

\end{proof}

Let us now study the deformation theory of the different components of $C$. The virtual dimension of the moduli space of genus $g$, $n$-marked stable maps to $S$ in a class $\beta$ is given by
\[(1-g)(dim \: S - 3) +n - K_S.\beta\]
applying this to our situation we see that $C$ moves in at least one dimensional family, and if $C_{tr}$ is connected then $C_{contr}$ also moves in a one dimensional family. We will work to relate deformations of these two curves. In any case there are three types of irreducible components of $\StableModuli_{0,0} (S, \beta)$, \emph{point components} whose image is dimension zero, \emph{curve components} whose image is dimension one and \emph{bubble components} which surject onto $\Hilb \cong \mathbb{P}^2$. Schematically the types of curves which can occur in each component is contained in Figure~\ref{fig:animals}. We can draw representative curves of each type in the class 
$E_1 + E_2 + 3F$ for sections $E_1, E_2$ satisfying $E_1=E_2$, $E_1\cdot E_2=1$
or $E_1\cdot E_2=0$ in the three cases depicted. 
The dashed line in the first figure shows that this is a double cover. We will later prove that these are representative of the general case.

\begin{figure}[h]
    \centering
    \begin{subfigure}[b]{0.3\textwidth}
        \begin{tikzpicture}
        \clip[draw](-2,-2) rectangle (2,2);
        \draw (-2,-0.3) -- (2, 0.5);
        \draw[dashed, ultra thick] (-2,-0.3) -- (2, 0.5);
        \NodalCurve{0.3}{0.7}{0.15}{0.4}
        \NodalCurve{-0.7}{-0.6}{-0.15}{0.4}
        \NodalCurve{1.2}{0.9}{0.15}{0.4}
      \end{tikzpicture}
      \caption{Bubble}
      \label{fig:gull}
    \end{subfigure}
    \begin{subfigure}[b]{0.3\textwidth}
      \begin{tikzpicture}
        \clip[draw](-2,-2) rectangle (2,2);
        \draw (-2, 0) .. controls (0,0) .. (-2,-2);
        \draw (2, 0) .. controls (-0,0) .. (2,2);
        \NodalCurve{0.65}{1}{0.15}{0.3}
        \NodalCurve{0.65}{-0.4}{0.15}{0.3}
        \NodalCurve{-1}{0.4}{-0.15}{0.3}
      \end{tikzpicture}
        \caption{Curve}
        \label{fig:tiger}
    \end{subfigure}
    \begin{subfigure}[b]{0.3\textwidth}
      \begin{tikzpicture}
        \clip[draw](-2,-2) rectangle (2,2);
        \draw (-2,1.4) -- (2, 1.2);
        \draw (-2,-1.4) -- (2, -1.2);
        \NodalCurve{0}{0}{1}{1}
        \NodalCurve{-0.6}{1}{-0.15}{0.3}
        \NodalCurve{-1.2}{-1}{-0.15}{0.3}
      \end{tikzpicture}
        \caption{Point}
        \label{fig:mouse}
    \end{subfigure}
    \caption{}
    \label{fig:animals}
\end{figure}

Let us begin by proving that if $C$ is in a point component then $C_{tr}$ is not connected.

\begin{lemma}[The structure curves over point components]

Let $\mathcal{M}$ be a point component. Then the restriction of the universal curve to this component decomposes as two constant components each mapping to potentially distinct sections joined by a tree of curves containing at least one component mapping to a fibre.

\end{lemma}

\begin{proof}

  If the transverse component $C_{\tr}$ is smooth and connected then \Lawrence{we have seen that we can deform it as} a double cover of $\mathbb{P}^1$. This leaves how to glue the component $C_{\fib}$ to this deformation. Suppose that $C_{\fib}$ meets $C_{\tr}$ in $P_1, \ldots , P_n$. \Lawrence{If none of these are ramification points of the map $C_{\tr} \rightarrow \PP^1$ then this deformation gives unique deformations of the $P_i$. If not then take the double cover $\AA^1 \rightarrow \AA^1$ given by $z \mapsto z^2$. Over this family there is a unique deformation given by choosing a branch to move along}. This allows us to lift the glueing data to the entire deformation contradicting Lemma \ref{thm:varying-fibres} and the definition of a point component. Therefore the universal curve must be generically reducible containing two distinct sections. The same argument applies if $C_{\tr}$ is the union of two sections meeting at a point.

Now suppose that $C_{\tr}$ is connected but not smooth hence of the form two sections joined by a chain of contracted components. Then by stability each contracted component in the chain must have an attached component mapping to a rational fibre.

\end{proof}

Now these point components are not relevant to the rest of our work, they cannot carry the correct log structure to be tangent to $F_0$ at a single point, nor are they generic enough to appear in our main construction. However the bubble components for sure will contribute, and controlling these will be key to our construction. In particular we can show that curve and bubble components do not meet.

\begin{theorem}[Curve and bubble components are disjoint]

Let $\mathcal{M}$ be a bubble component. Then $\mathcal{M}$ does not meet any curve components of the moduli space. 

\end{theorem}

\begin{proof}

We will begin by showing $f(C_{\tr})$ does not vary for any stable map in this space, a double cover of some section $\sigma$. We do this by first studying the restriction of the family to the boundary of the moduli space. Let $\partial \Hilb$ denote the conic of degenerate double covers inside $\mathcal{H}ilb^2 (\mathbb{P}^1) \cong \mathbb{P}^2/C_2$. Take a point $P \in \partial \Hilb$ such that the ramification point of the corresponding double cover $C_{\tr}\rightarrow \mathbb{P}^1$ is distinct from the images of any of the rational fibres of $S$. In this case, $C_{\tr}$ is reducible, the union of two sections meeting at a point which does not lie on one of the singular fibres and admits deformations which remain reducible. Sections however are rigid and so this can only happen if one has a double cover of a single section, $\sigma$.

Now suppose a curve component intersects $\mathcal{M}$. In this case, one
can find the spectrum of a DVR $B = \Spec R$ with closed point $b_{0}$ and generic point $b_{gen}$ and
a morphism $cl:B\rightarrow \StableModuli_{0,0}(S,[C])$ such that
$cl(b_0)\in \mathcal{M}$, $cl(b_{gen})\not\in\mathcal{M}$ and $cl(b_{gen})$
lies in a curve
component. Let
$\pi: C \rightarrow B$ be the pullback of the universal curve and $f: C \rightarrow S$ the corresponding stable 
map.

In fact, it is enough to show the following.
First, replace $C$ by removing the closures of irreducible components
of $C_{gen}$ mapping into the rational fibres of $S$ and taking the closure inside $C$. Thus we may assume
that $(C_{gen})_{\tr}=C_{gen}$. If we can show that $(C_0)_{\tr}=C_0$, then
as necessarily $f_*([C_0])=2[\sigma]$ and $\sigma^2=-1$, we see
that $f(C_{gen})\subseteq \sigma$ also. Thus $f$ yields a double cover
of $\sigma$, and hence $cl(b_{gen})$ in fact lies in a bubble component.

Our goal will be to decorate $C_{gen}$ with the structure of a log stable map. Then since the moduli space of log curves is proper the family $B$ admits a unique completion, and this completion must be a base-change of the classical stable map. But we will now see that results of~\cite{TheTropicalVertex} lead to
a contradiction. To this end we take the divisorial log structure on $S$ coming from the union 
$\cup F_i$ of all singular fibres.

To give a log structure on $C_{gen}$ we need to control the location of the singularities. Therefore we first exclude the case
that $C_{gen}$ has any nodes on the transverse component. Indeed, if $C_{gen}$ possesses a node then it is
reducible and the two components map to sections of $S\rightarrow \mathbb{P}^1$.
But then the family is constant since sections admit no deformations.
Thus we assume $f_{gen}^{-1}(\cup F_i)$ is a set of smooth points of $C_{gen}$
and we give $C_{gen}$ the divisorial log structure given by this divisor.
We then have a log morphism $C_{gen}\rightarrow S$.

Now we know that the generic fibre has a log structure and hence there is a unique choice of limit of stable log maps, possibly after passing to a branched
cover of $B$, see \cite{LogarithmicGromovWittenInvariants}, Theorem 4.1. We now claim that there are no log stable curves with components mapping into the fibre $F_0$. This is a consequence of the proof of Proposition 4.3 of~\cite{TheTropicalVertex}. This shows that $C_{gen}$ cannot degenerate to one of these curves. Thus we conclude all curve components are disjoint from $\mathcal{M}$.

\end{proof}

Now if we take a general point of $\Hilb$ there is a canonical way to produce a $K3$ surface by taking the fibre product over $\PP^1$. By the work of Bryan and Leung the moduli space of stable maps to an elliptically fibred $K3$ surface in a class $E + nF$ is independent of the choice of surface. Let us use this idea to describe the fibres of $\pi$ over a bubble component.

\begin{theorem}[A fibration of bubble components]

Let $\mathcal{M}$ be a bubble component. The map $\mathcal{M} \rightarrow \Hilb$ fibres $\mathcal{M}$ by moduli spaces of stable maps to (potentially singular) $K3$ surfaces. This fibration is trivial away from a codimension one set.
  
\end{theorem}

\begin{proof}

One might think that the fibration should be trivial as they correspond to different parametrisations of the rational fibres. This is only true generically but fortunately we can describe the locus where it does not hold. Let $R$ denote the locus inside $\Hilb$ of stable maps ramifying over the images of the $F_i$. Away from $R$ the fibration will be trivial since the only deformations are deformations of the rational tails.

Let $P \in \Hilb \setminus R \cup \partial \Hilb$ be a smooth double cover not ramifying over one of the rational fibres. Let $f: C \rightarrow S$ and $\pi: C \rightarrow B$ be a curve in $ram^{-1} P$ with $B$ an infinitesimal extension over $k[t]/t^2$. Let $B_0, C_0$ denote the reduced structure. We wish to show that this deformation of $C_0$ does not smooth any of the nodes between $C_{0,tr}$ and the other components. To do this we construct a moduli space of stable maps to a $K3$ surface where none of these smoothings exists and compare the obstruction theories. Take the following diagram
\[
\xymatrix@C=30pt
{
K\ar[r]^d\ar[d]&S\ar[d]^{\rho}\\
C_{0,tr}\ar[r]_{\rho\circ f}&\mathbb{P}^1
}
\]

This defines a smooth elliptically fibred $K3$ surface $K$ (here we are using the assumption that the curve does not ramify over the images of any of the rational fibres). $C_0$ maps to both $C_{0, tr}$ and $S$ and hence maps to $K$ as a section together with a collection of covers of the rational fibres. This shows that the fibres of $ram$ are isomorphic to moduli spaces of stable maps to a $K3$ surface. In particular by the beginning of section 5 ~\cite{TheEnumerativeGeometryOfK3SurfacesAndModularForms} there are no infinitesimal deformations smoothing the nodes lying on $C_{0,tr}$ since there are none on $K$. In particular this shows that over this set the fibration is trivial.

We now turn our attention to the boundary $\partial \Hilb$. We will show
that if $P\in\partial\Hilb$, then no map $\Spec k[\epsilon]/(\epsilon^2)
\rightarrow ram^{-1}(P)$ produces an infinitesimal deformation of stable
maps which
smooths any nodes between $C_{\tr}$ and $C_{\fib}$. Since $\partial\Hilb$ meets every codimension one locus in $\Hilb$ this would show that the set of points on which at least one of the deformations smooths these nodes is codimension at least two. Take $P \in \partial \Hilb$ and $f: C \rightarrow S$, $\pi:C \rightarrow B$ an infinitesimal deformation of a point $\pi: C_0 \rightarrow B_0$, $f_0: C_0 \rightarrow S$ in $ram^{-1}P$. Then $C_{0}$ contains a chain of rational curves connecting two components $D_1$ and $D_2$ which are mapped to sections of $S$. Since the deformation remains within the fibre it fails to smooth at least one node in this chain. Removing this node disconnects the domain curve $C_0$ into two components $C_a$ and $C_b$ and the deformation restricts to each of these. But $C_a$ and $C_b$ are sections together with rational tails, hence deformations of them fail to smooth any of the nodes connecting the rational tails to the section by the discussion of Section 5 of ~\cite{TheEnumerativeGeometryOfK3SurfacesAndModularForms}. Pulling back to the curve $C_0$ we obtain the desired statement.

Finally we claim that the fibres over any point in $\Hilb \setminus R$ are isomorphic. This is trivial from the above description. All these spaces are just moduli spaces of covers of the rational tails. It is worth saying that it is not true that this fibration is globally trivial. We can construct examples where the fibres of $ram$ over $R \cap \partial \Hilb$ are much higher dimension than the surrounding fibres since there could be more moduli in the way we connect the two transverse components together and such examples exist for three sheeted covers of one of the rational tails.

\end{proof}

Given an irreducible such curve, $C$ it is an easy calculation that the product $C \times_{\PP^1} S$ is a $K3$ surface. But $C$ maps both to itself via the identity and to $S$, compatibly over $\PP^1$. Therefore it also maps to this $K3$ surface. The moduli space we are considering provides us with a collection of components mapping to $\Hilb$. Of these we will be interested in those whose image is at least dimension one so we should search for a curve inside $\Hilb$. So let us perform this construction in a family setting.

\begin{construction}

   Let $D$ be a generic tri-degree $(1,1,2)$ surface inside $\mathbb{P}^1 \times \mathbb{P}^1 \times \mathbb{P}^1$ with projection maps $\pi_L$, $\pi _M$ and $\pi_R$ to the respective factors. We view this as a family of curves over $\mathbb{P}^1$ via the projection $\pi_L$. Taken this way each fibre is a bi-degree $(1,2)$ hypersurface inside $\mathbb{P}^1 \times\mathbb{P}^1$, so via $\pi_L$ the graph of a double cover of $\mathbb{P}^1$ by $\mathbb{P}^1$. This produces a morphism $\PP^1 \rightarrow \MTwoOne$ such that the family $\pi_L : D \rightarrow \PP^1$ is the pullback of the universal family of double covers. We wish to know that the trivial $C_2$ action on the Hilbert scheme $\Hilb$ is intertwined with the involution of $D$ swapping the two sheets of the double cover.

\end{construction}

  \begin{lemma}

    With the above notation the family $[D/C_2] \rightarrow [\PP^1/C_2]$ is the pull-back of the universal family of double covers of $\PP^1$ over the base.

  \end{lemma}

  \begin{proof}

    By construction the family $D$ is the pull-back of the universal family of
double covers to $\PP^1$. Therefore we have a diagram

    \[\begin{tikzpicture}
    \node (P1) at (0,0) {$\PP^1$};
    \node (M21) at (4,0) {$\MTwoOne$};
    \node (P1/2) at (2,1) {$[\PP^1/C_2]$};
    \node (D) at (0,2) {$D$};
    \node (C) at (4,2) {$\mathcal{C}$};
    \node (D/2) at (2,3) {$[D/C_2]$};
    \draw [->] (P1) -- (M21);
    \draw [->] (P1) -- (P1/2);
    \draw [->] (P1/2) -- (M21);
    \draw [->] (D) -- (P1);
    \draw [->] (C) -- (M21);
    \draw [->] (D/2) -- (P1/2);
    \draw [->] (D) -- (C);
    \draw [->] (D) -- (D/2);
    \draw [->] (D/2) -- (C);
    \end{tikzpicture}\]
    where the front face is a pullback square. Now $\PP^1$ is simply connected, hence there is only the trivial $C_2$ torsor over $\PP^1$. This shows that $\PP^1 \rightarrow [\PP^1/C_2]$ has a chart given by $\PP^1 \coprod \PP^1 \rightarrow \PP^1$. Taking the pull-back of $[D/C_2]$ to this and using that $D$ is the universal curve over $\PP^1$ we obtain the desired result.

\end{proof}

Let us explore some general features of this via an example.

\begin{example}

  Let us study the family with defining equation
  \[ x_1x_2x_3^2 + x_1y_2x_3y_3 + 2y_1x_2x_3y_3 + y_1y_2y_3^2 \]
where the $i^{th}$ factor has coordinates $x_i$ and $y_i$. The fibre over $(x_2, y_2)$ is the graph of a double cover of $\mathbb{P}^1$, ramifying where the discriminant $(x_1y_2+2y_1x_2)^2 - 4 x_1x_2y_1y_2 = x_1^2y_2^2 + 4y_1^2x_2^2$ vanishes. This gives two values for $(x_1:y_1)$ for each value of $(x_2:y_2)$. The fibre is singular just when these two values coincide, so when the discriminant of this quadratic vanishes, $-16 x_2^2y_2^2$. Therefore there are two singular fibres each with multiplicity two.
  
\end{example}

In general this shows that there will be four singular fibres and it is easy to construct examples where these occur as distinct fibres. We have constructed here a collection of double covers of $\mathbb{P}^1$, so the base $\mathbb{P}^1$ should map to $\Hilb$. The degenerate fibres inside $\Hilb$ form a conic inside $\mathbb{P}^2$, so by counting intersection points we know that the image of this family is also a conic. Therefore to search for curve and bubble components of the moduli space we are lead naturally to study the following fibre product:
\[\centerline{\xymatrix{X \ar[r]^{d} \ar[d] & S \ar[d]^{\rho} \\ D \ar[r]^{\pi_L} & \mathbb{P}^1}}\]
We call the constructed threefold $X$ an \emph{unravelled threefold}. Such a threefold is the intersection of a $(3,1,0,0)$ hypersurface and a $(0,1,1,2)$ hypersurface inside $\mathbb{P}^2 \times (\mathbb{P}^1)^3$. The $(0,1,1,2)$ hypersurface we may take to be generic whilst the $(3,1,0,0)$ hypersurface depends on the choice of rational elliptic surface $S$. The first hypersurface gives rise to a choice of line $\iota: [\PP^1/C_2] \rightarrow \MTwoOne \cong [\PP^2/C_2]$. We will restrict our range of choices later in the discussion to reflect the surfaces we choose to work with. Let $\mathcal{M}$ be a component of the moduli space $\StableModuli_{0,0} (S, f_*[C])$ and consider the image of $\mathcal{M}$ under the morphism $ram:\StableModuli_{0,0}(S,f_*[C])\rightarrow \mathcal{H}ilb^2 (\mathbb{P}^1) \cong \mathbb{P}^2/C_2$. Whenever the image intersects the image of the family of double covers of $\PP^1$ one can lift the corresponding stable map to $X$. The existence of a lift suggests that $\StableModuli_{0,n} (X, \tilde{f}_* ([C])) \cong \StableModuli_{0,n} (S, f_*[C]) \times_{\mathcal{H}ilb ^2 \mathbb{P}^1} \mathbb{P}^1$ along the map $ram$. Soon we will see that this description induces a relation between the Gromov-Witten invariants of these two moduli spaces.
  
\section{Calculating Gromov-Witten invariants}

We can describe this space $X$ as a nef complete intersection inside $\PP^2 \times \PP^1 \times \PP^1 \times \PP^1$. This allows us to apply techniques of Givental from~\cite{AMirrorTheoremforToricCompleteIntersection} to calculate the Gromov-Witten theory of $X$. We will begin by giving an explanation of this in the case of $S$ itself, and then explain why this is not enough to calculate the relative invariants we want. Instead we will prove that the $J$ function of $X$ is holomorphic, which provides strong bounds on the growth of the coefficients.

\begin{example}

  From the description above the threefold $X$ is the intersection of a $(3,1,0,0)$ hypersurface and a $(0,1,1,2)$ hypersurface in $\PP^2 \times \PP^1 \times \PP^1 \times \PP^1$.

\end{example}

Such hypersurfaces were studied in~\cite{AMirrorTheoremforToricCompleteIntersection}. The Gromov-Witten theory of a toric variety is very well understood and by using these line bundles we can localise the invariants to the intersection. Givental constructed solutions to two different quantum differential operators, and then proved that the solution spaces to these two were equal. Therefore there is a change of basis relating the constructed solutions. Let us recall the theory, starting with a toric variety $\mathbb{T}$ with codimension one strata $D_\rho$. Take a basis for the rational Chow ring $A^* (\mathbb{T})$, $\{H_i\}_1^n$, and a dual basis $\{H^i\}_1^n$ under cup product.

\begin{definition}

The $I_X$ and $J_X$ functions are defined for a smooth complete intersection $X \subset \mathbb{T}$. Suppose that $X$ is the common vanishing of sections of line bundles $\mathcal{L}_i$ with $-K_\mathbb{T} - \sum c_1(\mathcal{L}_i)$ nef. Then the $I_X$ function is defined by

\begin{align*}I_X (t_0, \ldots t_n) &= \\ e^{(t_0+\sum t_i H_i) / \hbar} Eul& (\oplus \mathcal{L}_i) \sum_\beta q^\beta \frac{\prod_i (\prod_{m=1} ^{\mathcal{L}_i (\beta)} (c_1 (\mathcal{L}_i) + m\hbar) \prod_\rho \prod_{m=-\infty}^{0} (D_\rho + m\hbar))}{\prod_\rho \prod_{m=-\infty}^{D_\rho \cap \beta} (D_\rho + m\hbar)} \end{align*}
where $q^{\beta} = e^{\sum t_i H_i \cap \beta}$ and the sum is over $\beta$ effective. This function is valued in $A^* (\mathbb{T})[\![t_0, \ldots t_n, \hbar^{-1}]\!]$. The function $J_X$ is a generating function for the gravitational correlators of $X$ and is valued in $A^* (X)[ \! [t_0, \ldots t_n, \hbar^{-1} ] \! ]$. It is defined by the formula
\[J_X (t_0 \ldots t_n) = e^{(t_0+\sum t_i H_i)/ \hbar} \left(1 + \sum_{\beta, i, k} \hbar^{-(k+1)} q^\beta \langle \tau_k H_a, 1 \rangle_{0,\beta} H^a \right)\]
where $\langle \tau_k H_a, 1 \rangle_{0,\beta}$ are the gravitational descendants, the sum is over positive $i, k$ and effective curve classes $\beta$.

\end{definition}

These two functions are not valued in the same ring so cannot be equal. Let $\iota: X \rightarrow \mathbb{T}$ be the inclusion. The main result of~\cite{AMirrorTheoremforToricCompleteIntersection} is an equality of generating functions $I_X(t_i) = \iota_* J_X (s_i)$  after some change of basis $t_i \mapsto s_i \in A^* (\mathbb{T})[\! [t_0, \ldots t_n ]\!]$.

\begin{theorem}[Mirror symmetry for Givental's $I$ and $J$ functions]

The functions $I_X$ and $\iota _* J_X$ are equal up to a homogeneous change of variables of the form $t_0 \mapsto t_0 + f_0 (z^{\beta}) \hbar + h (z^\beta)$, $t_i \mapsto t_i + f_i (z^\beta)$ where the $f_i$ and $h$ are homogeneous power series of weights $\deg f_i = 0$ and $\deg h = 1$ with degrees of
the variables being given by $c_1 (\mathbb{T}) - c_1 (\sum \mathcal{L}_i) = \sum deg (z^{H_i}) H_i$, $\deg \hbar = 1$, $\deg t_0=1$ and $\deg t_i=0$ for
$i>0$.

\begin{proof}

  See~\cite{AMirrorTheoremforToricCompleteIntersection} Theorem 0.1.
  
\end{proof}

\end{theorem}

A well written explanation of how to perform these calculations can be found in~\cite{MirrorSymmetryAndAlgebraicGeometry}. The rational elliptic surface $S$ is itself a $(3,1)$ hypersurface in $\PP^2 \times \PP^1$. Unfortunately this is not powerful enough to reconstruct the relative or open invariants and so it cannot answer our questions by directly calculating with $S$. Instead let us calculate the $J$ function of $X$ and attempt to interpret the answer geometrically. Since we are practising numerology we will not attempt to prove our claims. Firstly we take a basis for the cohomology $H_1, \ldots, H_4$ given by the duals of pullbacks of hyperplanes from each factor.

The invariants we wish to calculate intersect $H_2$, the class which restricts on each $K3$ fibre to a fibre of the fibration of that $K3$, at a single point. Therefore the number of such curves appear as the coefficient of $H_2$ in the corresponding term in the $J_X$ function. We must double the invariant to account for the tangency condition and double it again since we are really intersecting with a conic inside $\Hilb$. This suggests that each curve on $S$ should be counted with multiplicity four compared to the corresponding curve on $X$.

When we expand out the function $J_X$ the first three coefficients we calculate, the coefficients of $z^{(0,2,0,1)}, z^{(1,2,0,1)}$ and $z^{(2,2,0,1)}$, are $-9, 144$ and $1980$. The first of these we can easily explain, there are nine sections each admitting a bubble component, by the double cover formula the relative invariant should be $-9/4$. The 144 reflect the fact that for each family class $H - E_i$ there are four lines which are tangent to the boundary to order two, the coefficient therefore is $9 \times 4 \times 2 \times 2 = 144$.

For the term 1980 we begin to see an interaction between the bubble components and the curve components. Each of the lines $H - E_i - E_j$ admits a bubble component, there are 36 choices of such classes so we expect a contribution of $-36 = -36/4 \times 2 \times 2$. The classes $2H - E_i - E_j - E_k - E_l$ give a curve component of which four elements are tangent to the boundary. This gives a contribution of $126 \times 4 \times 2 \times 2 = 2016$, combining these we obtain the predicted contribution of $1980$. Unfortunately as the degree grows the presumed correspondence with the log invariants becomes intractable. To find the next term we would have to answer the question ``given a rational cubic $E$ passing through seven points $P_i$ what is the virtual contribution from rational cubics pass through those points meeting $E$ in one other point of tangency order two?''. This is why we must pass to the threefold.

Our main result for this section is that $J_X$ is actually a holomorphic function. We believe that this should be part of the general yoga of hypergeometric equations, but do not know of a reference. Let us recall the definition and some key facts about holomorphic functions. We will prove that $I_X$ is holomorphic and then deduce the holomorphicity of $J_x$.

Given a vector $\underline{v} = (v_1, \ldots v_n) \in \NN^n$ we write $x^{\underline{v}}$ for the monomial $\prod x_i^{v_i}$. Recall that a holomorphic function in variables $x_1, \ldots x_n$ is given locally by an absolutely convergent power series $\sum a_{\underline{v}} x^{v}$ converging on some disc and that if such a power series exists then operations such as differentiation and so on may be performed term by term. Let $diag (\underline{v})$ be the function sending $v$ to $\sum (v_i + 1)$. Suppose now that $a_{\underline{v}}$ are real numbers satisfying $|a_{underline{v}}| < c r^{diag (\underline{v})}$ for some fixed choice of real constants $c$ and $r$. Then by construction the power series
\[ f (x_1, \ldots, x_n) = \sum a_{\underline{v}} x^{\underline{v}}\]
converges on the polydisc around the origin in $\CC^n$ of radius $1/r$. We call such a family of $a_{\underline{v}}$ \emph{exponentially bounded}. We provide a converse to this statement.

\begin{proposition}

  Let $f: D^n (0,r) \rightarrow \CC$ be a holomorphic function on a polydisc around $0$ of radius $r$. Then $f (x_1, \ldots x_n) = \sum a_{\underline{v}} x^{\underline{v}}$ with the $a_{\underline{a}}$ exponentially bounded.
  
\end{proposition}

\begin{proof}

  The existence of the power series expansion is standard. We apply the Cauchy Integral formula for many variables to find $a_{\underline{v}}$.
  \[a_{\underline{v}} = \frac{1}{2\pi i}^n\int_{\partial D^n(0,r/2)} \frac{f(x_1, \ldots x_n)}{x^{\underline{v}} x_1 \ldots x_n} dx_1 \ldots dx_n.\]
  Therefore we have
  \[|a_{\underline{v}}| \leq \frac{\max_{z \in D^n (0,r/2)}|f(x_1, \ldots x_n)|}{(r/2)^{diag(\underline{v})}} \]
  where $f(x_1, \ldots x_n)$ is bounded on $D^n (0,r/2)$ . Therefore we have the result.
\end{proof}

\begin{definition}

  We work in slightly greater generality than is standard. Let $R = \CC[t_1, \ldots t_m] / \langle t_1^{k_1}, \ldots t_m^{k_m} \rangle$ be a finite dimensional $\CC$ algebra. We say that an element of $R[\! x_1, \ldots x_n \! ]$ is holomorphic if the coefficients of each monomial in the $t_{i}$'s are all holomorphic in the $x_i$.
  \label{def:exp-bounded}
\end{definition}
  
  The class of holomorphic functions is closed under many operations. This is a list of properties we need of them.

\begin{lemma}[A holomorphic toolkit]

The set of germs of holomorphic functions around $0$ is closed under the following operations:

\begin{enumerate}
  \item If an element $f \in \mathbb{C} [ \! [ t_1, \ldots t_n, x_1, \ldots x_m ] \! ]$  is holomorphic then its image in the quotient ring $R [ \! [ x_1, \ldots x_m ] \! ]$ obtained by dividing out by the ideal $\langle t_1^{m_1},\ldots,t_n^{m_n}\rangle$ is holomorphic.
\item If $f = \sum a_{\underline{i}, \underline{j}} t^{\underline{i}} x^{\underline{j}}$ is holomorphic and $g = \sum b_{\underline{i}, \underline{j}} t^{\underline{i}} x^{\underline{j}}$ is such that $| a_{\underline{i}, \underline{j}} | \geq | b_{\underline{i}, \underline{j}} |$ then $g$ is holomorphic.
\item Taking products of elements.
\item Taking inverses of invertible elements.
\item Taking exponentials of elements.
\item Substitutions of the form $x_i \mapsto x_i f_i$ where the $f_i$ are also holomorphic around $0$.
\item If $s_i = x_i f_i (x_0,  \ldots , x_m)$ is a change of basis with $f_i$ holomorphic and non-vanishing at zero then this change of basis locally has an inverse where the inverse map is given by $x_i = s_i g_i (s_0, \ldots, s_m)$ with the $g_i$ holomorphic.
\end{enumerate}

\end{lemma}

\begin{proof}

  The proof of i) is trivial.
  
  The proofs of ii) through vi) are standard for genuinely holomorphic functions of multiple variables. However the first result allows us to lift holomorphic elements of $R [ \! [ x_1, \ldots x_m ] \! ]$ to genuinely holomorphic functions of more variables by treating the $t_i$ as being free under the natural inclusion and then take the quotient.

  The second result says that if a collection $a_{\underline{v}}$ is exponentially bounded then any subcollection of the $a_{\underline{v}}$ are exponentially bounded too.
  
  To see the final result note that the derivative $\partial s_i / \partial x_j = \delta_{ij}$. Therefore the change of basis $\CC^n \rightarrow \CC^n$ is locally invertible by the inverse function theorem with holomorphic inverses. Since this maps $(0, \ldots 0)$ to $(0, \ldots 0)$ it has the required form.
  
\end{proof}

We now show that $I_X$ is exponentially bounded. Then the function $J_X$ was constructed using the tools coming from the above toolkit, and hence is also holomorphic.

\begin{theorem}[The $I$ function is holomorphic]

The function $I_X$ is holomorphic for $X$ an unravelled threefold,
where the coordinates $x_i$ in Definition \ref{def:exp-bounded} coincide
with $e^{t_i}$ in $I_X$, and the coordinates $t_i$ in
Definition \ref{def:exp-bounded} coincide with the $H_i$ in $I_X$.

\end{theorem}

\begin{proof}

To do this we prove the the coefficients of $I_X$ form an exponentially bounded set. We write out the definition of the $I_X$ function

\begin{equation}
I_X (t_0, \ldots t_n) = e^{\sum t_i H_i / \hbar} Eul (\oplus \mathcal{L}_i) \sum_\beta q^\beta \frac{\prod_i (\prod_{m=1} ^{\mathcal{L}_i (\beta)} (c_1 (\mathcal{L}_i) + m\hbar) \prod_\rho \prod_{m=-\infty}^{0} (D_\rho + m\hbar))}{\prod_\rho \prod_{m=-\infty}^{D_\rho \cap \beta} (D_\rho + m\hbar)}
\end{equation}

To prove boundedness, with $x_i=e^{t_i}$ and $x^{(a_1,\ldots,a_4)}
=\prod_i x_i^{a_i}$, we only need to prove boundedness of the sum

\begin{equation}
\sum x^{(a,b,c,d)} \frac{\prod_{m=1} ^{3a + b} (3H_1 + H_2 + m\hbar) \prod_{m=1} ^{b+c+2d} (H_2 + H_3 + 2H_4 + m\hbar)}{\prod_{m=1}^a (H_1 + m\hbar)^3\prod_{m=1}^b (H_2 + m\hbar)^2\prod_{m=1}^c (H_3 + m\hbar)^2\prod_{m=1}^d (H_4 + m\hbar)^2}
\end{equation}
Fix a coefficient $H_{\underline{i}} = \prod H_i ^{\alpha_i}$ and consider the coefficient of $H_{\underline{i}} x^{(a,b,c,d)}\hbar^{-c + \sum \alpha_i}$. Expanding out the above product we obtain the expression
\begin{align*}
  \sum x^{(a,b,c,d)} \hbar^{-c} & \frac{(3a + b)! (b + c + 2d)!}{a! ^3 b!^2 c!^2 d!^2} \\
  & \frac{\prod_{m=1} ^{3a + b} (\frac{3H_1 + H_2}{m\hbar} + 1) \prod_{m=1} ^{b+c+2d} (\frac{H_2 + H_3 + 2H_4}{m\hbar} + 1)}{\prod_{m=1}^a (\frac{H_1}{m\hbar} + 1)^3\prod_{m=1}^b (\frac{H_2}{m\hbar} + 1)^2\prod_{m=1}^c (\frac{H_3}{m\hbar} + 1)^2\prod_{m=1}^d (\frac{H_4}{m\hbar} + 1)^2} \end{align*}
any monomial appearing in this consists of two parts, a term $\frac{(3a + b)! (b + c + 2d)!}{a! ^3 b!^2 c!^2 d!^2}$ with no $\hbar$ term times a correction factor involving a sum of products of terms $1/i$. The correction factor is at most a polynomial in $a,b,c,d$, hence exponentially bounded since there are only finitely many choices of $\underline{i}$. We now bound the term $\frac{(3a + b)! (b + c + 2d)!}{a! ^3 b!^2 c!^2 d!^2}$ using Stirling's formula:
\[
\frac{(3a + b)! (b + c + 2d)!}{a! ^3 b!^2 c!^2 d!^2} 
% < k \frac{(3a + b)^{3a+b} (b+c+2d)^{b+c+2d} e^{c} 2 \pi \sqrt{(3a + b) (b + c + 2d)}}{a^{3a} b^{2b} c^{2c} d^{2d} \sqrt{512 \pi ^ 9 a^3 b^2 c^2 d^2 }}\\
< k e^c \cdot \frac{(3a + b)^{3a+b} (b+c+2d)^{b+c+2d}}{a^{3a} b^{2b} c^{2c} d^{2d}} \cdot \frac{2 \pi \sqrt{(3a + b) (b + c + 2d)}}{\sqrt{512 \pi ^ 9 a^3 b^2 c^2 d^2 }}
\]
The middle term is the only one not obviously exponentially bounded. We rewrite it as

\begin{equation}
\frac{(3a + b)^{3a}}{a^{3a}} \cdot \frac{(3a+b)^b}{b^b} \cdot \frac{(b + c + 2d)^b}{b^{b}} \cdot \frac{(b + c+ 2d)^{c}}{c^{2c}} \cdot \frac{(b + c +2d) ^{2d}}{d^{2d}}
\end{equation}

Rearranging the first term we get $(3+b/a)^{3a}$ and we wish to compare this to $r^{a + b}$. Let $r = 27$, then $27^{a+b} = 3^{3a+3b}$ and we can rewrite the quotient $(3+b/a)^{3a} / r^{a + b}$ as
\[ \frac{(1+\frac{b}{3a})^{3a}}{3^b}.\]
Taking log and using that $\log (1+x) < x$ we find that
\[ 3a \log \left(1 + \frac{b}{3a} \right) - b \log (3) < b - b \log (3) < 0\]
Hence for $r > 27$ we see that $(3 + b/a)^{3a} < r^{a + b}$. There are similar choices of $r$ for each other term. Our choice of $r$  is the product of all of these minimal choices. Then since $r^{a+b+c+d}$ dominates all terms of the form $r^{a+b}$ etc. we see that the coefficients of $I_X$ are exponentially bounded. Thus $I_X$ defines a holomorphic function in a neighbourhood of the origin.

\end{proof}

\begin{corollary}

  The function $J_X$ is also holomorphic.
  
\end{corollary}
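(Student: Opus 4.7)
The plan is to realise $J_X$ as the output of a sequence of operations applied to $I_X$, each of which lies within the closure properties of the holomorphic toolkit. By the mirror theorem stated above, we have
\[ I_X(t_0, \ldots, t_n) = \iota_* J_X(s_0, \ldots, s_n) \]
where the change of coordinates $t_0 \mapsto s_0 = t_0 + f_0(z^\beta)\hbar + h(z^\beta)$ and $t_i \mapsto s_i = t_i + f_i(z^\beta)$ for $i > 0$ is given by homogeneous power series in the $z^\beta = e^{\sum t_i H_i \cap \beta}$. Since the Euler class factor $\mathrm{Eul}(\oplus \mathcal{L}_i)$ is polynomial in the $H_i$ and hence holomorphic, and the $z^\beta$ vanish at the origin, the whole recipe for producing $J_X$ is formally: divide out the Euler class, read off the mirror map from the $\hbar$-expansion, invert the mirror map, and substitute. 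It is enough to check that each step preserves holomorphicity.

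First I would verify that the mirror map coefficients $f_0, f_i, h$ are themselves holomorphic. They are determined by matching coefficients in the $\hbar$-expansion of $I_X$ (after factoring out the Euler class), so they are obtained from the coefficients of $I_X$ by arithmetic operations and by reading off specific power-of-$\hbar$ components. Holomorphicity of $I_X$ established in the previous theorem, together with items (ii) and (iv) of the holomorphic toolkit, gives holomorphicity of each $f_i$ and $h$ in a neighbourhood of the origin. Next, since the Jacobian of the substitution $t_i \mapsto s_i$ is the identity at $z = 0$, item (vii) of the toolkit gives a holomorphic inverse $s_i \mapsto t_i$. Substituting this inverse into $I_X$ (item (vi)) yields $\iota_* J_X$ as a holomorphic function of the $s_i$.

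Finally, to pass from $\iota_* J_X$ to $J_X$ one divides by $\mathrm{Eul}(\oplus \mathcal{L}_i)$; this factor is invertible in the formal ring of interest, so item (iv) applies and leaves a holomorphic expression whose coefficients against a basis for $A^*(X)$ are the coefficients of $J_X$. Each such coefficient is a holomorphic function of the $s_i$, and $J_X$ itself is therefore holomorphic. I expect the main obstacle to be the bookkeeping needed to identify the mirror map coefficients unambiguously inside $I_X$ and to check that the pushforward $\iota_*$ does indeed reduce to the Euler class factor in this complete-intersection set-up, but all of this is standard for Givental's theory and requires no analytic input beyond the holomorphicity of $I_X$ already proved.
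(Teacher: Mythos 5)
Your proposal matches the paper's own (very terse) justification: the paper simply observes that $J_X$ is obtained from $I_X$ by the operations listed in the holomorphic toolkit — reading off and inverting the mirror map via item (vii), substituting via item (vi), and taking products, quotients and subcollections of coefficients via items (ii)--(iv) — and concludes holomorphicity from the theorem on $I_X$. Your write-up is a correct and somewhat more careful elaboration of exactly that argument, including the bookkeeping caveat about extracting $J_X$ from $\iota_* J_X$, which the paper does not address explicitly.
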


\section{Relating Gromov-Witten invariants}

This is the real technical heart of the paper. In this section we will prove that the Gromov-Witten count of curves on $X$ and on $S$ are related by a Gysin map induced by a regular embedding $\PP^1 \hookrightarrow \Hilb$. The invariants we actually wish to count are then related by the composition of two Gysin maps, one induced by an inclusion of components and the other by a regular embedding $\PP^1 \hookrightarrow \Hilb$. This inclusion of components causes some problems for us, since we do not know if we have thrown away components with large virtual degree.

Let us begin by proving our philosophy, that the moduli spaces of stable maps to $X$ and to $S$ are highly related.

\begin{lemma}[The unravelled threefold lifts curves]
Let $X \subset \mathbb{P}^2 \times (\mathbb{P}^1)^3$ be an unravelled three-fold, $\pi_M$ the projection to the third factor, a copy of $\mathbb{P}^1$,
and let
\[
d^{-1} \beta = \{ \alpha \in A_1 (X) \mid \pi _{M, *} \alpha = 0, d_* \alpha = \beta\}
\]
be the set of effective classes on $X$ contained inside a fibre of $\pi_M$  
and such that $d_* \alpha = \beta$. \Lawrence{We write $\StableModuli(X,d^{-1}(\beta))$ for the union of moduli spaces $\bigcup _{\alpha \in d^{-1} \beta} \StableModuli_{0,0} (X, \alpha)$}. Then consider the following diagram
\[
\xymatrix{\StableModuli(X,d^{-1}(\beta)) \ar@/^2pc/[rrrd]^{d_*} \ar@/_2pc/[rdd]^{ pr _X} \ar[rd]^{\psi} & & & \\
 & \mathbb{P}^1 \times_{\MTwoOne} \StableModuli_{0,0} (S, \beta)  \ar[rr]^{\iota} \ar[d] ^{pr _S } & & \StableModuli_{0,0} (S, \beta) \ar[d]^{ram} \\
 & \mathbb{P}^1 \ar[rr]^{\iota} & & \MTwoOne}
\]
where $pr_X$ associates to a stable map $f:C/B\rightarrow X$ the morphism
$B\rightarrow \mathbb{P}^1$ given by the ramification of the composed
map $\rho\circ d\circ f$, where $\rho:S\rightarrow \mathbb{P}^1$ is the
elliptic fibration.
Then the map $\psi$ is an isomorphism, and thus a two to one cover of $[\mathbb{P}^1/C_2] \times_{\MTwoOne} \StableModuli_{0,0} (S, \beta) \subset \StableModuli_{0,0} (S, \beta)$.
\end{lemma}

\begin{proof}
By genericity we may assume that the choice of map $\mathbb{P}^1 \rightarrow \Hilb$ avoids all point components and is transverse to the images of all curve components. We use the universal property of products to construct an inverse.

Let $S \xleftarrow{f} C \xrightarrow{\pi} B$ be a family of stable maps inside $\mathbb{P}^1 \times_{\MTwoOne} \StableModuli_{0,0} (S, \beta)$. Since $D$ is the restriction of the universal curve on $\MTwoOne$ to $\PP^1$ there is an induced morphism $C \rightarrow D$, and hence by properties of fibre products a unique morphism $C \rightarrow X$. This constructs an inverse to $\psi$, which we call it $\phi$.

We now need to prove that the two compositions $\phi \psi$ and $\psi \phi$ are in fact the identity maps. To show that $\psi \cdot \phi$ is the identity we show that $\iota \cdot \psi	 \cdot \phi = \iota$ and $pr_S \cdot \psi \cdot \phi = pr_S$. These are both automatic from the construction of $\phi$. Indeed, $\iota \cdot \psi \cdot \phi = d_* \cdot \phi = \iota$ and $pr_S \cdot \psi \cdot \phi = pr_X \cdot \phi = pr_S$.

Finally we show the composition $\phi \cdot \psi$ is the identity. Let $f: C \rightarrow X$ be a stable map in $\bigcup _{\alpha \in d^{-1} \beta} \bar{\mathcal{M}}_{0,0} (X, \alpha)$. The image $\psi (f:C \rightarrow X)$ is given by pushforward to $S$ and has a unique lift. But $f: C \rightarrow X$ is such a lift and so this composition too is the identity map.

\end{proof}

Since any degree two map from $[\mathbb{P}^1/C_2]$ to $[\mathbb{P}^2/C_2]$ is a regular embedding
there exists a Gysin map for the inclusion $\iota: \StableModuli (X, d^{-1}\beta) \rightarrow \StableModuli (S, \beta)$ covering $\mathbb{P}^1 \rightarrow \Hilb \cong [\mathbb{P}^2/C_2]$. We follow~\cite{VirtualPushforward} in constructing a compatibility datum between the two spaces.

\begin{theorem}[Compatibility of Gromov-Witten invariants betwen $S$ and $X$]

  For a generic choice of unravelling $d: X \rightarrow S$ there is an equality between the virtual classes associated to the obstruction theories on $\StableModuli_{0,0}(X, d^{-1} \beta)$ and $\StableModuli_{0,0} (S, \beta)$ under the Gysin map $\iota ^!$:
\[[\StableModuli_{0,0}(X, d^{-1} \beta)]^{vir} = 2 \iota^! [\StableModuli_{0,0} (S, \beta)] ^{vir}\]

\label{thm:SandXcomparisons}

\end{theorem}

\begin{proof}

We choose a generic unravelling which does not meet any point components and is transverse to any curve components. We have the following commutative diagram
\[
\xymatrix{X \ar[r]^{d} & S \\
\mathcal{C}_{0,0} (X,d^{-1} \beta) \ar [r]^{\iota} \ar[d]^{\pi_X} \ar [u]^{f_X} & \mathcal{C}_{0,0} (S, \beta) \ar[d]^{\pi_S} \ar [u]^{f_S} \\
\StableModuli_{0,0} (X,d^{-1} \beta) \ar [r]^{\iota} \ar[d]^{ram}  & \StableModuli_{0,0} (S, \beta) \ar[d]^{ram}\\
\mathbb{P}^1 \ar [r]^{\iota}& \MTwoOne
}
\]

The desired compatibility datum is a morphism of distinguished triangles
\[
\xymatrix{\cdots \ar[r] & (\iota^* R \pi_{S, *} f_S^* \Theta_{S/k})^{\vee} \ar[r]^{\phi} \ar[d]^{\gamma_1} & (R \pi_{X, *} f_X^* \Theta_{X/k})^{\vee} \ar[r] \ar[d]^{\gamma_2} & ram^* L_{\mathbb{P}^1 / \mathbb{P}^2} \ar[r] \ar[d]^{\gamma_3} & \cdots\\
\cdots\ar[r] & \iota^* L_{\StableModuli_{0,0}(S, \beta)/\mathfrak{M}} \ar[r] & L_{\StableModuli_{0,0}(X, d^{-1} \beta) / \mathfrak{M}} \ar[r] & L_{\StableModuli_{0,0}(X, d^{-1} \beta) / \StableModuli_{0,0}(S, \beta)} \ar[r] & \cdots}
\]

To prove this we must show that the cone over $(\iota^* R \pi_{S, *} f_S^* \Theta_{S/k})^{\vee} \rightarrow (R \pi_{X, *} f_X^* \Theta_{X/k})^{\vee}$ is quasi-isomorphic to $ram^* L_{\mathbb{P}^1 / \mathbb{P}^2}$. First let us show that the map $\gamma_3 : ram ^* L_{\mathbb{P}^1/\Hilb} \rightarrow L_{\StableModuli_{0,0}(X, d^{-1} \beta) / \StableModuli_{0,0}(S, \beta)}$ is an isomorphism. The morphism $\mathbb{P}^1 \rightarrow \Hilb$ is a local complete intersection morphism, defined by a single equation $F$ of degree two in $k[X,Y,Z]$. So in particular, $F$ forms a length one regular sequence
in $k[X,Y,Z]$. By genericity we may take the images of $\mathbb{P}^1$ and the curve components of $\StableModuli_{0,0}(S, \beta)$ to be transverse inside $\Hilb$. Therefore the sequence $F$ remains regular when pulled back to $\StableModuli_{0,0}(S, \beta)$ and $\StableModuli_{0,0}(X, d^{-1} \beta)$ is just the vanishing locus of the pullback of $F$. In the case of a regular embedding
there is an explicit formula for the cotangent complex, namely it is $\mathcal{I}/ \mathcal{I}^2[1]$ where $\mathcal{I}$ is the defining sheaf of ideals. Both source and target of $\gamma_3$ is $ram^*(\langle F \rangle / \langle F \rangle^2[1])$ and $\gamma_3$ is the identity.

Now taking the cone \Lawrence{over $\phi$} there is a morphism of triangles of the following form
\[
\centering
\xymatrix{... \ar[r] &(\iota^* R \pi_{S, *} f_S^* \Theta_{S/k})^{\vee} \ar[r] \ar[d] &(R \pi_{X, *} f_X^* \Theta_{X/k})^{\vee} \ar[r] \ar[d] & \Cone \ar[r] \ar[d]^{\delta} & ...\\
  ... \ar[r] & \iota^* L_{\StableModuli_{0,0}(S, \beta)/\mathfrak{M}} \ar[r] & L_{\StableModuli_{0,0}(X, d^{-1} \beta) / \mathfrak{M}} \ar[r] & L_{\StableModuli_{0,0}(X, d^{-1} \beta) / \StableModuli_{0,0}(S, \beta)} \ar[r] & ...}
\]
\begin{equation}
\label{diag:bigone}
\end{equation}
To begin with we need a result allowing us to commute pull-back and push-forward. We claim that there is an isomorphism
\[\iota^* R \pi _{S \; *} f_S^* \Theta_{S/k} \cong R \pi_{X \; *} \iota ^* f_S^* \Theta_{S/k} \cong R \pi_{X \; *} f_X^* d^* \Theta_{S/k}\]
The second isomorphism is automatic, so we focus on the first. By~\cite{EGA4}, III.2, Corollary 6.9.9 and the fact that families of curves are flat, cohomology and base change commute. Applying~\cite{GromovWittenInvariantsInAlgebraicGeometry}, Proposition 5, $f^* _S \Theta_{S/k}$ admits a resolution by a two term complex of vector bundles $b: F^{-1} \rightarrow F^0$ such that $\pi_* F^i = 0$ and $R^1 \pi_* F^i$ is a vector bundle. 
Now $\pi_* f^* \Theta_{S/k}$ is just the cokernel of $R^1 \pi _* (F^0) \rightarrow R^1 \pi _* (F^1)$. But we have seen that cohomology and base change commute, thus we have an isomorphism $\iota^* R^1 \pi _* (F^i) \cong R^1 \pi_* (\iota ^* F^i)$ and $\iota^* R^0 \pi_* (F^i) = 0$. Now this is enough to show that there is the desired equality.

This allows us to calculate $\mathcal{C}one$ from the distinguished triangle on $X$
\begin{equation}
\label{exact-sequence-upstairs} \rightarrow \Theta _{X/k} \rightarrow d^* \Theta_{S/k} \rightarrow \MacBeth \rightarrow 
\end{equation}
The complex $\MacBeth$ is a line bundle in degree minus one and supported only on the critical locus of $X$ over $S$ in degree zero. The fibres of the map $d: X \rightarrow S$ are isomorphic to the fibres of $d:D \rightarrow \PP^1$. By an elementary calculation this is only singular over two points of $\PP^1$ where the corresponding double cover degenerates. The choice of $D$ was arbitrary so we can arrange that these two singular points do not occur over the critical values of the map $S \rightarrow \PP^1$, so over the images of the singular fibres. This ensures that $H^0 (\MacBeth)$ is supported only on two elliptic fibres of the fibration $\rho: S \rightarrow \PP^1$.

Let us describe the pullback of \Lawrence{$\mathcal{D}$} to fibres of the universal curve of stable maps. Let $f: C \rightarrow X$ be a stable map. By necessity such a stable map factorises through a (possibly singular) $K3$ double cover of $S$. This shows that $L^0 f^* \MacBeth$ is supported only on a finite set since the universal curve can have no components mapping into an elliptic fibre. By applying the Grothendieck spectral sequence we therefore see that both $H^1 (R\pi_*Lf^*\MacBeth)$ and $H^{-1}((R\pi_*Lf^*\MacBeth)^\vee)$ vanish.

By pushing forward \eqref{exact-sequence-upstairs} and dualizing in the
derived category, we obtain a long exact sequence
%  Now consider the push-forward to the moduli space. In general there is an exact sequence
%\[0\rightarrow H^{-1}R\pi_*Lf^*\MacBeth \rightarrow H^{0}R\pi_*f^* \Theta _{X/k} \rightarrow H^{0}R\pi_* f^* d^* \Theta_{S/k} \rightarrow H^{0}R\pi_*Lf^*\MacBeth \rightarrow \]
%\[ \rightarrow H^{1}R\pi_*f^* \Theta _{X/k} \rightarrow H^1R\pi_* f^* d^* \Theta_{S/k} \rightarrow 0\]
%and dualising we obtain a sequence
\begin{align*}
0\rightarrow & H^{-1} ((R\pi_*f^* d^* \Theta _{S/k})^\vee)
 \rightarrow  H^{-1}((R\pi_* f^* \Theta_{X/k})^\vee)
 \rightarrow   H^{0}((R\pi_*Lf^*\MacBeth)^\vee)\\
 \rightarrow & H^{0}((R\pi_*f^* d^* \Theta _{S/k})^\vee) \rightarrow H^{0}((R\pi_* f^* \Theta_{X/k})^\vee)
 \rightarrow  H^{1}((R\pi_*Lf^*\MacBeth)^\vee) \rightarrow 0
\end{align*}
we see that $H^{1}((R\pi_*Lf^*\MacBeth)^\vee)$ vanishes by mapping via the canonical isomorphisms to the non-negative part of the cotangent exact sequence:
\[  \iota^* L_{\StableModuli_{0,0}(S, \beta)/\mathfrak{M}} \rightarrow L_{\StableModuli_{0,0}(X, d^{-1} \beta) / \mathfrak{M}} \rightarrow L_{\StableModuli_{0,0}(X, d^{-1} \beta) / \StableModuli_{0,0}(S, \beta) } \rightarrow \]
and applying the 5-lemma. We now have an exact sequence
\[0\rightarrow H^{-1} ((R\pi_*f^*d^* \Theta _{S/k})^\vee) \rightarrow H^{-1}((R\pi_* f^* \Theta_{X/k})^\vee) \rightarrow H^{0}((R\pi_*Lf^*\MacBeth)^\vee) \rightarrow \]
\[ \rightarrow H^{0}((R\pi_*f^* d^* \Theta _{S/k})^\vee) \rightarrow H^{0}((R\pi_* f^* \Theta_{X/k})^\vee) \rightarrow 0\]
and by the 5-lemma $H^{0}((R\pi_*Lf^*\MacBeth)^\vee)$ maps surjectively to $ram^* L_{\PP^1/\PP^2}$. 
Taking stalks at a generic point $\eta=\Spec K$ of the moduli space,
we may apply the Riemann-Roch theorem to the two tangent sheaf terms and using additivity of Euler characteristics we find that $H^{0}((R\pi_*Lf^*\MacBeth)^\vee)_{\eta}$ has dimension one as a vector space over $K$. Now $(R\pi_*Lf^*\MacBeth)^\vee$ has cohomology supported in degrees minus one and zero. Let $E^\bullet$ be a resolution of $R\pi_*Lf^*\MacBeth$, which is quasi-isomorphic to the truncation $\tau_{\leq 0} E^\bullet$ defined in~\cite{StacksProject} Tag 0118. Taking a projective Cartan Eilenberg double resolution of this we obtain a resolution of $R\pi_*Lf^*\MacBeth$ by projectives concentrated in non-positive degree. Therefore after dualising $R\pi_*Lf^*\MacBeth^\vee$ has a resolution concentrated in non-negative degree. This shows that $H^0(R\pi_*Lf^*\MacBeth^\vee)$ embeds into a vector bundle, and hence is torsion free. \Lawrence{We prove an analogous result to the statement that a surjective map of line bundles is an isomorphism}. Take the morphism
\[\delta: H^{0}((R\pi_*Lf^*\MacBeth)^\vee) \rightarrow L_{\StableModuli_{0,0}(X, d^{-1} \beta) / \StableModuli_{0,0}(S, \beta)} \]
of the diagram \eqref{diag:bigone}.
After localising at a point $x$ we have an isomorphism 
\[ 
L_{\StableModuli_{0,0}(X, d^{-1} \beta) / \StableModuli_{0,0}(S, \beta), x}
\cong \mathcal{O}_{\StableModuli_{0,0}(X, d^{-1} \beta), x}
\]
 and we know that $H^{0}((R\pi_*Lf^*\MacBeth)^\vee)$ is a rank one torsion free sheaf with $\delta$ is surjective. Therefore we have the following situation: a local ring $R$, a rank one torsion free $R$-module $M$ and a surjection $\delta:M \rightarrow R$. We claim that this implies that $M$ is isomorphic to $R$. Since $\gamma$ is surjective we can choose $m \in M$ mapping to $1 \in R$. Suppose that there were $n \in M$ not in the submodule of $M$ generated by $m$. The element $n$ maps to $f \in R$ and so at every generic point of $R$ one has $n = fm$. But $M$ is torsion free and so $n = fm$. Therefore $m$ generates $M$ freely, i.e., $R \cong M$ as $R$-modules. Therefore $H^{0}((R\pi_*Lf^*\MacBeth)^\vee)$ is isomorphic to $ram^* L_{\PP^1/\PP^2}$.

\Lawrence{The Gysin map $\iota^!$ produces a class on $[\mathbb{P}^1/C_2] \times_{\MTwoOne} \StableModuli_{0,0} (S, \beta)$. However the moduli space $\StableModuli_{0,0} (X, d^{-1} \beta)$ is a two to one \'etale cover of $[\mathbb{P}^1/C_2] \times_{\MTwoOne} \StableModuli_{0,0} (S, \beta)$. Therefore by ~\cite{VirtualPushforward} we obtain the formula
\[ 2 \iota^! [\StableModuli_{0,0} (S, \beta)]^{virt} = \iota_* [\StableModuli_{0,0} (X, d^{-1} \beta)]^{virt}.\]
We call this \emph{the comparison formula for bisections on $S$ and sections on $X$}.}

\end{proof}

This leaves the question of relating the relative invariants on $S$ to the classical invariants. Bryan and Leung studied this problem for the tangent order one case. Their motivation was to apply point conditions to stable maps directly by restricting to those stable maps whose marked points map to $P$. The deformation theory then needs to be modified, restricting to those deformations which vanish at the marked point. In their Appendix A they prove that this produces the correct invariants and extend this to the case of certain divisors. We apply their ideas here, starting with the easiest case. Recall that we write $\LogModuli (S^\dagger, \beta)$ for the moduli of log stable maps to $S^\dagger$ in a class $\beta$ tangent to the boundary at a single point with maximal tangency. 

\begin{theorem}[Log invariants of sections are restrictions of Gromov-Witten invariants]

Let \Lawrence{$E$} be a section of a rational elliptic surface $S$, and let $\beta = E + nF$. Then there is an inclusion of components $\iota:  \LogModuli (S^\dagger, \beta) \rightarrow \StableModuli_{0,0} (S, \beta) $ and this induces an equality of virtual classes $\iota^* [\StableModuli_{0,0} (S, \beta)] =  [ \LogModuli (S^\dagger, \beta) ] $.
\label{thm:LogEqualityForSections}

\begin{proof}

In~\cite{TheEnumerativeGeometryOfK3SurfacesAndModularForms} the authors constructed the moduli space $\StableModuli_{0,0} (S, \beta)$: any curve is the union of the section $E$ together with different covers of the rational fibres. Since any curve in $\LogModuli(S, \beta)$ has a unique choice of marked point $\sigma$ the forgetful map is injective and since different components correspond only to different covers of the rational fibres this map is an inclusion of components.
Indeed by Proposition 4.3 of~\cite{TheTropicalVertex} the components of
$\LogModuli(S,\beta)$ are precisely those components of $\StableModuli_{0,0}
(S,\beta)$ where no component of the universal curve maps into the boundary.

Let us denote by $\mathfrak{M}_{0,0}$ the moduli Artin stack of prestable
genus zero curves and by $\mathfrak{M}^{\dagger}_{0,1}$ the moduli Artin
stack of all pre-stable genus one-pointed log curves. Recall that
the virtual fundamental class of $\StableModuli_{0,0}(S,\beta)$ is
defined via an obstruction theory relative to $\mathfrak{M}_{0,0}$
and recall
from \cite{LogarithmicGromovWittenInvariants} that the virtual
fundamental class of the moduli space $\LogModuli(S,\beta)$ is defined
by an obstruction theory relative to $\mathfrak{M}^{\dagger}_{0,1}$.

We consider the product $\StableModuli (S, \beta)_{0,1} \times_S F_0$ and restrict to those components of
$\StableModuli (S, \beta)_{0,1}$ where no component of the domain curve maps into $F_0$. We have inclusions $\LogModuli(S, \beta) \subset \StableModuli (S, \beta)_{0,1} \times_S F_0 \cong \StableModuli (S, \beta)_{0,0}$. Furthermore there is a virtual class on $\StableModuli (S, \beta)_{0,1} \times_S F_0$ relative to $\mathfrak{M}_{0,1}$ defined by the obstruction bundle induced by the pull-back
of the kernel of the natural morphism $\Theta_{S/k} \rightarrow{\mathcal N}_{F_0/S}$. Note that this kernel is rank two bundle away from the singular
points of $F_0$, and no curve in the moduli space passes through these
singular points.
Sections of the pull-back of this bundle then correspond to infinitesimal
deformations that deform the marked point in a direction along $F_0$.
It is a folklore result exposited in remark A.5 of the appendix of~\cite{TheEnumerativeGeometryOfK3SurfacesAndModularForms} that the virtual class this defines is equal to the pullback of the virtual class on $\StableModuli (S, \beta)_{0,0}$. \Lawrence{Their result holds so long as $F_0$ and $S$ are both smooth. This is not the case and we instead consider $\Spec k \in \PP^1$ which indeed does have smooth source and target. Then the Gysin map of $F_0$ in $S$ is compatible with the Gysin map of a point in $\PP^1$ for which we do have the desired equality.}

It remains to compare the virtual classes on $\StableModuli (S, \beta)_{0,1} \times_S F_0$ and on $\LogModuli(S, \beta)$. Write $D_i$ for the components of $F_0$. There is a standard exact sequence of sheaves on $S$ given by
\[0 \rightarrow \Theta_{S^\dagger/k}^\dagger \rightarrow \Theta_{S/k} \rightarrow \bigoplus \Lawrence{i_*} \mathcal{N}_{D_i/S} \rightarrow 0.\]
By definition the curves we wish to count do not map into the intersections $D_i \cap D_j$ and so the pull-backs of $\Theta^{\dagger}_{S^{\dagger}/k}$ or
$\ker(\Theta_{S/k}\rightarrow {\mathcal N}_{F_0/S})$ to any stable curve in $\LogModuli(S, \beta) \cong \StableModuli (S, \beta)_{0,1}$ are isomorphic. We consider the two cotangent complexes $L_{\LogModuli (S^{\dagger}, \beta) / \mathfrak{M}^{\dagger}_{0,1}}$ and $L_{\StableModuli_{0,1} (S, \beta) \times_S F_0 / \mathfrak{M}_{0,1}}$. Note the log structure on $\LogModuli (S^{\dagger}, \beta)$
is the pull-back of the basic log structure on $\mathfrak{M}_{0,1}$ under
the forgetful map because there are no components or nodes of the domain
curve mapping into $F_0$. Thus we have a strict factorisation 
$\LogModuli(S^{\dagger},\beta)\rightarrow \mathfrak{M}_{0,1}
\rightarrow \mathfrak{M}_{0,1}^{\dagger}$, with the second morphism
\'etale. Thus the two cotangent complexes are isomorphic.

Since we have an isomorphism of obstruction theories and an isomorphism of underlying schemes the corresponding virtual classes are equal.

\end{proof}

\end{theorem}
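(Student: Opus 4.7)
The plan is to first establish the claimed inclusion of components and then to match the two virtual classes by factoring the comparison through an auxiliary one-pointed moduli space.

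For the inclusion, I would invoke the explicit description of $\StableModuli_{0,0}(S, \beta)$ due to Bryan--Leung: every stable map in class $\beta = E + nF$ is the union of the unique section $E$ with a collection of covers of rational fibres, so the irreducible components of $\StableModuli_{0,0}(S, \beta)$ are indexed by the combinatorics of these covers. A log stable map in $\LogModuli(S^\dagger, \beta)$ with maximal tangency at a single point cannot have any irreducible component of its domain mapping into $F_0$, since such a component would force the associated tropical curve either to split or to have multiple contact points with the boundary. Appealing to Proposition~4.3 of~\cite{TheTropicalVertex}, the components of $\StableModuli_{0,0}(S, \beta)$ satisfying this condition are precisely those avoiding the boundary, and since the marked point of a log stable map is canonically determined by its image, the forgetful map is injective. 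Hence $\LogModuli(S^\dagger, \beta)$ embeds as a union of connected components.

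To compare virtual classes, I would introduce the one-pointed moduli space $\StableModuli_{0,1}(S, \beta)$ together with its evaluation morphism to $S$ and consider the fibre product $\StableModuli_{0,1}(S, \beta) \times_S F_0$, restricted to those components with no domain component mapping into $F_0$. The folklore result exposited in Remark~A.5 of~\cite{TheEnumerativeGeometryOfK3SurfacesAndModularForms} identifies the natural virtual class on this fibre product with the Gysin pullback of $[\StableModuli_{0,0}(S, \beta)]^{vir}$, provided the ambient target is smooth. Because $F_0$ is nodal, one cannot apply this directly; instead I would factor the evaluation through $\rho: S \to \PP^1$ and apply the folklore comparison at a smooth point of $\PP^1$, which is legitimate because the log stable maps under consideration never meet the nodes of $F_0$.

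The final comparison between $\StableModuli_{0,1}(S, \beta) \times_S F_0$ and $\LogModuli(S^\dagger, \beta)$ proceeds via the short exact sequence
\[0 \rightarrow \Theta^\dagger_{S^\dagger/k} \rightarrow \Theta_{S/k} \rightarrow \bigoplus_i i_* \mathcal{N}_{D_i/S} \rightarrow 0\]
where the $D_i$ are the components of $F_0$. Away from the nodes of $F_0$, the log tangent sheaf $\Theta^\dagger_{S^\dagger/k}$ agrees with the kernel of the restriction map $\Theta_{S/k} \to \mathcal{N}_{F_0/S}$. Since the universal log stable map avoids the nodes, pulling back produces isomorphic sheaves, so the perfect obstruction theories relative to $\mathfrak{M}_{0,1}^\dagger$ and $\mathfrak{M}_{0,1}$ agree. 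Moreover, since no component of the universal curve maps into $F_0$, the basic log structure on $\LogModuli(S^\dagger, \beta)$ is simply pulled back from $\mathfrak{M}_{0,1}$, so the natural morphism to $\mathfrak{M}_{0,1}^\dagger$ is strict and \'etale, yielding the desired identification of virtual classes. The main obstacle I anticipate is keeping careful track of the log structure alongside the singularity of $F_0$: one must verify both that the relevant log stable maps avoid the nodes and that the folklore comparison remains valid around these singularities, but both reduce to the observation that no stable map in class $E + nF$ can hit a node of $F_0$ without violating the maximal tangency condition.
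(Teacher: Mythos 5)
Your proposal is correct and follows essentially the same route as the paper: the Bryan--Leung description plus Proposition~4.3 of the Tropical Vertex paper for the inclusion of components, the auxiliary space $\StableModuli_{0,1}(S,\beta)\times_S F_0$ with the folklore comparison of Remark~A.5 (including the same workaround of factoring through $\rho:S\to\PP^1$ to handle the nodal fibre), and the exact sequence for $\Theta^\dagger_{S^\dagger/k}$ together with strictness over $\mathfrak{M}_{0,1}$ to match the obstruction theories. No substantive differences to report.
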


The techniques of Bryan and Leung do not apply directly to the tangency order two case. This means we have to work harder.

\begin{theorem}[Log invariants of bisections are restrictions of Gromov-Witten invariants]

Let $ i : \mathbb{P}^1 \rightarrow \Hilb \cong \mathbb{P}^2/C_2$ be the hyperplane of double covers ramifying over $0 \in \mathbb{P}^1$ and $\beta$ be a curve class with $\beta . K_S = -2$. We denote the product $\StableModuli_{0,0} (S,\beta) \times_{\Lawrence{[\mathbb{P}^2/C_2]}} \mathbb{P}^1$ by $\StableModuli^{ram}(S, \beta)$ with projection maps $pr$ to $\StableModuli_{0,0} (S, \beta)$ and $ram$ to $\mathbb{P}^1$. Then the moduli space of log stable maps to $S^\dagger$, $\LogModuli (S^\dagger,\beta)$ admits a morphism $j$ to $\StableModuli^{ram} (S,\beta)$ which is an inclusion of components and there is an equality of virtual classes

\begin{equation}
j^* i^! [\StableModuli_{0,0}(S,\beta)]^{virt} = [\LogModuli(S^\dagger,\beta)]^{virt}
\end{equation}

\label{thm:logtocclassicalcomparison}

\begin{proof}

The map $j$ is the forgetful map, forgetting the choice of marking. A log curve in this space only meets the boundary in a single point, so maps to the desired space. By construction of the space $\overline{\mathcal M}^{ram} (S, \beta)$ there is a canonical choice of marked point given by intersecting with the boundary $F_0$ so long as no component of the curve maps into $F_0$. Viewing this
point as a logarithmic marked point with contact order $2$ with one of the
boundary divisors $D_i$ or contact orders $1$ with adjacent divisors
$D_i$, $D_j$, 
we obtain a family of stable log maps. Thus $\LogModuli (S^\dagger, \beta)$ is open inside some components of $\StableModuli^{ram}$ but is also proper and hence closed. Thus $j$ is an inclusion of components. To be precise it is those components which do not cover the marked rational fibre and have precisely one point mapping to the boundary, which excludes any point components. 

As in the single section case we relate potentially distinct obstruction theories on $\LogModuli (S^\dagger, \beta)$ and $\StableModuli_{0,0}(S,\beta)$. Recall that the obstruction theories on $\LogModuli (S^\dagger, \beta)$ and $\StableModuli_{0,0}(S,\beta)$ are given by $R \pi_{S, *} f_S^* \Theta_{S^\dagger/k} ^{\dagger \: \vee}$ and $R \pi_{S, *} f_S^* \Theta_{S/k} ^{\vee}$ respectively. There is an exact sequence
\[ 0 \rightarrow  \Theta_{S^\dagger/k}^\dagger \rightarrow \Theta_{S/k} \rightarrow \bigoplus \mathcal{O}_{D_i}(D_i) \rightarrow 0 \]
where $F_0 = \bigcup D_i$. Pulling back to the universal curve over $\LogModuli (S^\dagger, \beta)$, pushing forward along $\pi$ and dualising we therefore obtain a distinguished triangle
\[ (R \pi_* f^* \bigoplus \mathcal{O}_{D_i}(D_i))^\vee \rightarrow (R \pi_*f^*\Theta_{S/k})^\vee \rightarrow (R \pi_*f^*\Theta_{S^\dagger/k}^\dagger)^\vee \rightarrow\]
Now $f^* \bigoplus \mathcal{O}_{D_i}(D_i)$ is supported only at a single point of each fibre of $\pi$, hence it pushes forwards to a line bundle on $\LogModuli (S^\dagger, \beta)$. Consider the cotangent triangle associated to the maps $\LogModuli (S^\dagger, \beta) \hookrightarrow \StableModuli (S, \beta) \rightarrow \mathfrak{M}_{0,0}$, noting that as before the map $\LogModuli (S^\dagger, \beta) \rightarrow \mathfrak{M}_{0,0}^\dagger$ factors through the \'etale map $\mathfrak{M}_{0,0} \rightarrow \mathfrak{M}_{0,0}^\dagger$
\[ i^*L_{\StableModuli (S, \beta)/\mathfrak{M}_{0,0}} \rightarrow L_{\LogModuli (S^\dagger, \beta)/\mathfrak{M}_{0,0}^\dagger} \rightarrow L_{\LogModuli (S^\dagger, \beta)/ \StableModuli (S, \beta)} \rightarrow\]
Since $(R \pi_*f^*\Theta_{S/k})^\vee$ and $(R \pi_*f^*\Theta_{S^\dagger/k}^\dagger)^\vee$ are obstruction theories we obtain by~\cite{VirtualPullbacks} that $(R \pi_* f^* \bigoplus \mathcal{O}_{D_i}(D_i))^\vee[1]$ is an obstruction theory for $L_{\LogModuli (S^\dagger, \beta)/ \StableModuli (S, \beta)}$. But $L_{\LogModuli (S^\dagger, \beta)/ \StableModuli (S, \beta)}$ is a regular immersion defined by one equation, namely that defining $\PP^1 \subset \PP^2$. We therefore deduce that $L_{\LogModuli (S^\dagger, \beta)/ \StableModuli (S, \beta)}$ is a line bundle supported in degree minus one. But $(R \pi_* f^* \bigoplus \mathcal{O}_{D_i}(D_i))^\vee[1]$ is also a line bundle supported in degree minus \Lawrence{one} and maps surjectively and hence isomorphically to $L_{\LogModuli (S^\dagger, \beta)/ \StableModuli (S, \beta)}$. This produces an equality of virtual classes on $\LogModuli(S^\dagger, \beta)$
\[\iota^! ([\StableModuli_{0,0}(S,\beta)]^{vir}) = [\LogModuli(S,\beta)]^{vir}\]
as desired.
\end{proof}

\end{theorem}

We can combine this with our relation between the invariants of $X$ and $S$, producing the following theorem.

\begin{corollary}[A formula for bisections on $S$]

There are equalities
\[  deg \: (ram^* \Lawrence{(2H)} \cdot[\StableModuli_{0,0} (S, \beta)]^{virt}) = deg \:([\StableModuli_{0,0} (X, d^{-1}\beta)]^{virt}) \]
and
\[deg \: (j^* (ram^* \Lawrence{(H)} \cdot [\StableModuli_{0,0} (S, \beta)]^{virt})) = deg \: ([\LogModuli (S^\dagger, \beta)]^{virt}) \]
\end{corollary}

\section{Bounding Gromov-Witten invariants}

As we said at the start of the previous section there is no reason why when we pass to the log moduli space we don't remove components with high degree, potentially ruining our bound of the $J$-function. This section is dedicated to proving that restriction of the virtual class to curve components is positive, whilst the restriction to bubble components is not too negative. We begin by proving that the point components make no contribution to either of our invariants.

\begin{lemma}[Point components are trivial]

The restriction of $ram^* H \cdot [\StableModuli_{0,0} (S, \beta)]^{virt}$ to point components is zero.

\end{lemma}

\begin{proof}

This follows from conservation of number and choosing a different hyperplane to intersect with.

\end{proof}

By relating the invariants on $S$ to those on a $K3$ fibre of $X$ we can apply techniques from~\cite{TheEnumerativeGeometryOfK3SurfacesAndModularForms}. The authors there prove positivity of the virtual class restricted to various components of the moduli space.

\begin{theorem}[Curve components are positive]

The restriction of $ram^* H \cdot [\StableModuli_{0,0} (S, \beta)]^{virt}$ to the preimage of a curve component $\mathcal{M}$ is positive.

\label{thm:CurvesArePositive}

\begin{proof}
  \Lawrence{By our compatibility result relating Gromov-Witten invariants on $S$ and $X$ it is enough to prove this positivity on the threefold $X$}. This problem was studied by Maulik and Pandharipande in~\cite{GromovWittenTheoryAndNoetherLefschetzTheory} but we can prove the equality here via a direct calculation. Recall that the reduced invariants on a $K3$ surface $K$ are given by an obstruction theory $T^\circ$ fitting into an exact triangle
  \[  \tau_{\geq -1} R \pi _* \omega_{\pi} \otimes H^0 (K, \omega_K) \rightarrow R \pi_* f^* \Theta_{K/k} \rightarrow T^\circ \rightarrow\]
    where $\tau_{\geq -1} R \pi _* \omega_{\pi} \otimes H^0 (K, \omega_K)$ is a line bundle supported in degree minus one. Suppose now that $i:K \hookrightarrow X$ is a fibre of the $K3$ fibration of $X$ into which all the stable curves in $\mathcal{M}$ map. There is an exact sequence on $K$
    \[ 0 \rightarrow \Theta_{K/k} \rightarrow \Lawrence{i^*} \Theta_{X/k} \rightarrow \mathcal{N}_{K/X} \rightarrow 0 \]
    where the normal bundle $\mathcal{N}_{K/X}$ is trivial since $K$ is a fibre. Pulling this back to the universal curve, pushing forwards to the moduli space and dualising we obtain a distinguished triangle on $\StableModuli_{0,0} (X, \beta)$
    \[ \mathcal{L} \rightarrow (R \pi_* f^* i^* \Theta_{X/k})^\vee \rightarrow (R \pi_* f^* \Theta_{K/k})^\vee  \rightarrow \]
    where $\mathcal{L}$ is a line bundle supported in degree zero. By composition we find a morphism $(R \pi_* f^* i^* \Theta_{X/k})^\vee \rightarrow T^\circ$ and let $\Cone$ be the cone over this. Applying the octahedral axiom we obtain a diagram
 \[ \begin{tikzpicture}
      \node (T) at (0,0) {$T^\bullet$};
      \node (L) at (0,2) {$\mathcal{L}[1]$};
      \node (C) at (3,0.5) {$\Cone[1]$};
      \node (K) at (-3,0.5) {$R \pi_* f^* \Theta_{K/k}^\vee$};
      \node (X) at (-6,-1) {$R \pi_* f^* \Theta_{X/k}^\vee$};
      \node (R) at (6,-1) {$\tau _{\geq -1} R \pi _* \omega_{\pi} \otimes H^0 (K, \omega_K)[1]$};
      \node (E0) at (1.5,2.75) {};
      \node (E1) at (4.8,0.8) {};
      \node (E2) at (9,-1.5) {};
      \node (E3) at (9,-2.5) {};
      
      \draw [->] (L) -- (E0);
      \draw [->] (C) -- (E1);
      \draw [->] (R) -- (E2);
      \draw [->] (R) -- (E3);      
      \draw [->] (L) -- (C);
      \draw [->] (K) -- (L);
      \draw [->] (K) -- (T);
      \draw [->] (T) -- (C);
      \draw [->] (X) -- (K);
      \draw [->] (X) -- (T);
      \draw [->] (T) -- (R);
      \draw [->] (C) -- (R);
      \end{tikzpicture}\]
    showing that the following triangle is distinguished
    \[  \mathcal{L} \rightarrow \Cone \rightarrow  \tau _{\geq -1}R \pi _* \omega_{\pi} \otimes H^0 (K, \omega_K) \rightarrow \]
    Now the special property of the preimage of a curve components is that each component is the moduli space of stable maps to $K$. In particular the degree zero cohomology of $(R \pi_* f^* i^* \Theta_{X/k})^\vee$ and $T^\circ$ are isomorphic since both are isomorphic to the tangent space of $\mathcal{M}$. This is enough to show that in fact $\Cone$ vanishes since $\Cone$ has a two term resolution by line bundles, \Lawrence{the differential is a surjection of line bundles since the degree zero cohomology vanishes and hence this surjection is an isomorphism.}

    Now $\mathcal{M}$ is a component of the moduli space of stable maps both to $K$ and $X$ and the two induced obstruction theories are isomorphic. The result is known from~\cite{TheEnumerativeGeometryOfK3SurfacesAndModularForms} that each component of the moduli space contributes either one or zero to the total count.

\end{proof}

\end{theorem}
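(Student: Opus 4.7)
The plan is to reduce the positivity question on $S$ to the analogous (already known) positivity statement for stable maps to a $K3$ surface by way of the unravelled threefold $X$. By Theorem~\ref{thm:SandXcomparisons} (with $2H$ the class of the line $\PP^1 \subset \Hilb$) the degree of $ram^*H \cdot [\StableModuli_{0,0}(S,\beta)]^{virt}$ on a curve component $\mathcal{M} \subset \StableModuli_{0,0}(S,\beta)$ is computed up to the factor of two by the degree of $[\StableModuli_{0,0}(X, d^{-1}\beta)]^{virt}$ on the corresponding (\'etale double cover) curve component $\widetilde{\mathcal{M}} \subset \StableModuli_{0,0}(X,d^{-1}\beta)$. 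So I would work on $X$ and show that $[\StableModuli_{0,0}(X,d^{-1}\beta)]^{virt}|_{\widetilde{\mathcal{M}}}$ has non-negative degree.

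Next I would use the fibration of bubble/curve components by $K3$ moduli spaces established earlier in the paper. Away from the codimension one locus where the fibration degenerates, each stable map in $\widetilde{\mathcal{M}}$ factors through a smooth $K3$ fibre $i: K \hookrightarrow X$ of the projection $\pi_M : X \to \PP^1$. Since $K$ is a fibre the conormal sequence
\[0 \rightarrow \Theta_{K/k} \rightarrow i^*\Theta_{X/k} \rightarrow \mathcal{N}_{K/X} \rightarrow 0\]
has $\mathcal{N}_{K/X}$ trivial. Pulling this back to the universal curve, pushing forward by $\pi$, and dualising produces a distinguished triangle comparing the $X$-obstruction theory to the $K$-obstruction theory, whose third term is a trivial line bundle in degree zero.

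The key step is then to match this triangle with the triangle defining Bryan and Leung's reduced obstruction theory $T^\circ$ on the $K3$ moduli space, namely
\[\tau_{\geq -1} R\pi_*\omega_\pi \otimes H^0(K,\omega_K) \rightarrow R\pi_*f^*\Theta_{K/k} \rightarrow T^\circ \rightarrow.\]
Since $K_K$ is trivial the extra line bundle appearing in the reduction is also trivial; running both triangles through the octahedral axiom I would get a cone sitting between two trivial line bundles. The genus-zero hypothesis and identification of the respective degree zero cohomologies (both compute the tangent space to $\mathcal{M}$ as a moduli space of stable maps into the $K3$) should force this cone to vanish, giving an identification of the restricted $X$-obstruction theory with the reduced $K3$-obstruction theory. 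Once this identification is in hand, the positivity is exactly the content of Lemma~5.7 and Theorem~6.2 of Bryan--Leung, which give that every component contributes $0$ or $1$.

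The main obstacle is the last paragraph: the two triangles live on moduli spaces of stable maps to different targets ($X$ versus $K$), and I need to know that the third terms line up on $\widetilde{\mathcal{M}}$ rather than merely fitting into a formal diagram. Making this rigorous requires either a careful application of the octahedral axiom together with the observation that $\Cone$ has a two-term resolution by line bundles with an isomorphism on $H^0$, or an appeal to a virtual pullback/pushforward result such as those of \cite{VirtualPullbacks, VirtualPushforward} adapted to the $K3$-fibration structure. Beyond that the proof should be a mechanical combination of the comparison theorem with the Bryan--Leung count.
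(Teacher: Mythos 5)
Your proposal follows essentially the same route as the paper: reduce to $X$ via the comparison theorem, restrict to the $K3$ fibre through which the curve component factors, compare the $X$-obstruction theory with Bryan--Leung's reduced theory via the normal bundle sequence and the octahedral axiom, and kill the resulting cone by matching degree-zero cohomologies so that its two-term line-bundle resolution becomes an isomorphism. The step you flag as the main obstacle is resolved in the paper exactly as you suggest, so the proposal is correct and matches the paper's argument.
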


Now on the bubble component we prove that the virtual class cannot become too negative.

\begin{theorem}[Bubble components are bounded]
  
Let $X$ be a generic unravelling of $S$, $\StableModuli^{bub} \subset \StableModuli_{0,0} (X, d^{-1}\beta)$ the preimage of a bubble component, a family over $\iota : \mathbb{P}^1 \rightarrow \Hilb \cong \mathbb{P}^2/C_2$. Choosing a point $P: Spec \: k/C_2 \rightarrow \mathbb{P}^2/C_2$ in the image of $\iota$ defines a $K3$ surface $cov:K \rightarrow S$. This induces a map $P: \StableModuli_{0,0} (K, cov ^* \beta) \rightarrow \StableModuli^{bub}$ and generically fibres $\StableModuli^{bub}$ by moduli spaces of stable maps to $K3$ surfaces, $Kl: \StableModuli^{bub} \rightarrow \mathbb{P}^1$. We claim that there is a constant $q \in \mathbb{Q}$ with $P_* [\StableModuli_{0,0} (K, cov ^* \beta)] = q [_{0,0} (X, d^{-1} \beta)]$ and that $q$ is dependent only on the choice of $X$.\label{thm:BubblesArentTooNegative}
\begin{proof}

  This question is related to the comparison theory of Maulik and Pandharipande in~\cite{GromovWittenTheoryAndNoetherLefschetzTheory}. In~\cite{GromovWittenTheoryAndNoetherLefschetzTheory}, the authors consider a family of nonsingular $K3$ over a non-singular curve $C$, $p: X \rightarrow C$. The Gromov-Witten invariants of $X$ are related to those of a fibre $K$ by multiplication by the degree of the Hodge bundle of $X \rightarrow C$. The details of this calculation are contained in~\cite{HodgeIntegralsAndGromovWittenTheory}.

  In our situation however there are some singular fibres of the fibration $p: X \rightarrow \PP^1$. Twenty four of these fibres are ordinary double points, occurring where the double cover $\PP^1 \rightarrow \PP^1$ ramifies over the image of a rational fibre. One can take a resolution of this family as described in Example 5.1 of~\cite{GromovWittenTheoryAndNoetherLefschetzTheory} to obtain a new family of non-singular $K3$ surfaces with the same Gromov-Witten invariants. In their paper Maulik and Pandharipande move to an analytic space to construct the small resolution. \Lawrence{This is a strictly analytic construction and to apply techniques of log geometry we must remain in the algebraic world. We appeal to Theorem 7.3 of~\cite{AlgebraizationOfFormalModuliII} which says that any Moishezon manifold is the analytification of an algebraic space. This resolution is indeed a Moishezon manifold since it is a small resolution of another Moishezon manifold. Therefore we consider a small resolution $\pi: \tilde{X} \rightarrow X$ with $\tilde{X}$ a smooth algebraic space. The moduli stack of stable maps to a Deligne Mumford stack was constructed in~\cite{GromovWittenTheoryOfDeligneMumfordStacks}. In that paper the authors also proved that the usual technology for constructing virtual classes can be applied to such objects and so the techniques of~\cite{HodgeIntegralsAndGromovWittenTheory} continue to apply.} However there are also two singular fibres where the $K3$ surface degenerates to the union of two rational elliptic surfaces meeting along a smooth elliptic fibre. We augment this singular family to a smooth log family. Let $K_1$ and $K_2$ denote these two singular fibres and we write $X^\dagger$ for $X$ with the divisorial log structure induced by the $K_i$. Since we have not imposed any tangency conditions \Lawrence{and a generic curve is transverse} the classical and log Gromov-Witten invariants coincide. Now by construction the map $p: X^\dagger \rightarrow \PP^1$ is log smooth and we can define a relative theory by taking the cone over the natural morphism
    \[\tau_{\geq-1} R \pi_* (\omega_\pi^\dagger) ^\vee \otimes H^0 (X, \omega_{X/\PP^1}) \rightarrow R \pi_* f^* (\Theta_{X/\PP^1})\]
    This gives a relative obstruction theory which may play the role of the relative theory defined in section 2.2 of~\cite{GromovWittenTheoryAndNoetherLefschetzTheory}. In particular their equation
    \[ [\StableModuli _{0} (\pi, \epsilon)]^{vir} = c_1 (K^*) \cap [\StableModuli_{0}(\pi, \epsilon)]^{red}\]
from page 22 continues to hold, but where $K$ is the log canonical line bundle with fibre $H^0 (X_{\xi}, K^\dagger_{X_\xi})$ over a point $\xi$. This proves the desired result since we may take the Chern class to be supported away from the singular fibres.

\end{proof}

\end{theorem}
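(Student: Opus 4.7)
The plan is to imitate the Maulik--Pandharipande comparison between Gromov--Witten invariants of a family of $K3$ surfaces and those of a single fibre (see \cite{GromovWittenTheoryAndNoetherLefschetzTheory, HodgeIntegralsAndGromovWittenTheory}), but adapted to the fibration $p:X\rightarrow \PP^1$ of an unravelled threefold. The strategy splits into three pieces: set up the ambient family, resolve or log-smoothen the singular fibres, and then invoke the Hodge-bundle comparison formula.

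\medskip

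First, I would identify the singular fibres of $p:X\rightarrow\PP^1$. Since the branch locus of $D\rightarrow\PP^1$ consists of four reduced points (by the explicit discriminant calculation in the paragraph preceding the unravelled threefold), and since at each such point the fibre of $X$ over the ramification locus acquires singularities from the ramification of the double cover $K\rightarrow S$, a generic choice of unravelling produces two kinds of degenerate fibres: twenty-four fibres where the branch point of the double cover lies over the image of one of the rational fibres of $\rho:S\rightarrow\PP^1$, giving an ordinary double point, and two further fibres where the double cover of $\PP^1$ by $\PP^1$ itself degenerates, forcing the $K3$ fibre to split as a type II degeneration into two rational elliptic surfaces glued along a smooth elliptic fibre. (A dimension count and the transversality of a generic unravelling rule out coincidences between these two types of degeneration.)

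\medskip

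Second, I would eliminate the twenty-four nodal fibres by small resolution. Each ordinary double point of a threefold admits two analytic small resolutions, and following \cite{GromovWittenTheoryAndNoetherLefschetzTheory} these do not affect the Gromov--Witten invariants of classes contracted to $\PP^1$. The only subtlety is that small resolutions of threefold nodes need not exist in the algebraic category; here I would cite Artin's algebraization theorem (Theorem 7.3 of~\cite{AlgebraizationOfFormalModuliII}) together with the fact that a small resolution of a Moishezon threefold is again Moishezon to obtain a smooth algebraic space $\tilde X\rightarrow X$ with the same fibre-class Gromov--Witten theory, extending the technology of~\cite{GromovWittenTheoryOfDeligneMumfordStacks} to this setting. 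After this step we are left with the two type II fibres $K_1$, $K_2$ to handle.

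\medskip

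Third, I would equip $\tilde X$ with the divisorial log structure coming from $K_1\cup K_2$ to obtain a log smooth morphism $p^{\dagger}:\tilde X^{\dagger}\rightarrow\PP^1$. Since the curve classes of interest do not impose tangency conditions and a generic such curve is transverse to $K_1\cup K_2$, the comparison result of~\cite{ComparisonTheoremsForGromovWittenInvariantsOfSmoothPairsAndOfDegenerations} identifies the log Gromov--Witten invariants with the ordinary ones. Taking the cone of the natural morphism
\[
\tau_{\geq -1}R\pi_{*}(\omega_{\pi}^{\dagger})^{\vee}\otimes H^{0}(X_{\xi},\omega_{X_{\xi}}^{\dagger})\rightarrow R\pi_{*}f^{*}\Theta_{X/\PP^{1}}^{\dagger}
\]
defines a reduced relative obstruction theory that is the log analogue of the one used on page 22 of~\cite{GromovWittenTheoryAndNoetherLefschetzTheory}. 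The key identity
\[
[\StableModuli^{bub}]^{vir}=c_{1}(K^{*})\cap[\StableModuli^{bub}]^{red}
\]
then holds with $K$ the log Hodge line bundle, whose fibre over $\xi\in\PP^1$ is $H^{0}(X_{\xi},\omega_{X_{\xi}}^{\dagger})$. Finally, pushing forward by $Kl$ and observing that the class of a generic fibre of $Kl$ is the image $P_{*}[\StableModuli_{0,0}(K,cov^{*}\beta)]$ yields the asserted proportionality with $q=\deg c_{1}(K^{*})$, a rational number depending only on the geometry of $X\rightarrow\PP^1$ and not on the curve class $\beta$.

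\medskip

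The main obstacle is the third step: verifying that the log relative obstruction theory really is a relative perfect obstruction theory in the sense needed to run Behrend--Fantechi virtual pullback, and that the Hodge-bundle identity of~\cite{HodgeIntegralsAndGromovWittenTheory} transfers cleanly from the smooth $K3$ fibration case to the log smooth one. The choice to support the Chern class computation away from $K_{1}$ and $K_{2}$ is what makes this manageable, since on the complement of the type II locus the family is genuinely smooth and the existing arguments apply verbatim.
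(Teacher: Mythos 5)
Your proposal follows essentially the same route as the paper's own argument: identify the twenty-four nodal fibres and the two type II fibres, remove the nodes by an algebraized small resolution (via Artin's theorem and the Moishezon property, with the stack-theoretic virtual class machinery), put the divisorial log structure along the type II fibres to get a log smooth family with a reduced relative obstruction theory, and conclude via the Maulik--Pandharipande Hodge-bundle identity $[\cdot]^{vir}=c_1(K^*)\cap[\cdot]^{red}$ with the Chern class supported away from the bad fibres. The only difference is cosmetic: you flag the perfectness of the log relative obstruction theory as the remaining obstacle, which the paper likewise asserts rather than verifies in detail.
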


\section{Assembling the family}

We now have a description of the product formula on the mirror, of the terms appearing in the scattering diagram and of the piecewise linear function $\phi$. Therefore we can assemble all of these together to prove convergence of the mirror. This pits the growth of the function $\phi$ against the growth of the number of curves, so we begin with bounding the growth of $\phi$.

\begin{lemma}[Bounding the function $\phi$]
  We choose the lift of $\phi$ which is zero on the cone $C$ of Figure~\ref{fig:DualIntersectionRES}. Over a one-cell $v_i$ recall that the piecewise linear function $\phi$ changes by $D_i \otimes n_i$, where $n_i$ is defined in Definition
\ref{def:sheaf-of-monoids}. Then $\phi$ is given by the following formulae on the following cones. On $\langle (4n, 1), (4n+1, 1)\rangle$ it is given by
\[nx F - (n(2n-1)D_1 + 2n^2D_2 + n(2n+1)D_3 + n(2n+2)D_4)y\]
whilst on $\langle (4n+1, 1), (4n+2, 1)\rangle$ it is given by
\[nx F + x D_1 - ((n+1)(2n+1)D_1 + 2n^2D_2 + n(2n+1)D_3 + n(2n+2)D_4)y\]
and on $\langle (4n+2, 1), (4n+3, 1)\rangle$ it is given by
\[nx F + x(D_1 + D_2) - ((n+1)(2n+1)D_1 + 2(n+1)^2D_2 + n(2n+1)D_3 + n(2n+2)D_4)y\]
and finally on $\langle (4n+3, 1), (4n+4, 1)\rangle$ it is given by
\[nx F + x(D_1 + D_2 + D_3) - ((n+1)(2n+1)D_1 + 2(n+1)^2D_2 + (n+1)(2n+3)D_3 + n(2n+2)D_4)y\]

We now find rough bounds on $\phi$. Suppose that $m > n > 0$ are integers and we wish to compute $\phi ((m,1)) - \phi ((n,1))$. Since $\phi$ is strictly convex the difference between these is bounded by $\phi((m,1)) - \phi((m-1,1))$. From the formulae above we see that $\phi ((m,1)) - \phi ((m-1,1)) \geq (2\left \lfloor{m/4}\right \rfloor -1) D_{m-1}$ where the index of $D_{m-1}$ is taken mod $4$. The inequality $\phi ((m,1)) - \phi ((m-1,1)) \geq (1- 2\left \lfloor{m/4}\right \rfloor) D_{m+1}$ similarly holds for $m < n < 0$. Now suppose that $n,k > 0$ and we consider $\phi ((m,1)) - \phi ((n,1)) - \phi ((k,1))$ with $m \ge n+k$. By convexity this value is smallest if $n = m-1$ and $k=1$. In that case we can use the above bound to see that this is bounded also by $(2\left \lfloor{m/4}\right \rfloor -1)D_{m-1}$.

\end{lemma}

Recall the structure of the equations from Table~\ref{eqntable}. We will work through each term appearing and prove that they all converge in a neighbourhood of the origin.

\begin{proposition}

The coefficients $f_{(k,2)}, g_{(k,2)}$ and $r_{(k,2)}^i$ of these equations converge in a neighbourhood of the large complex structure limit point.

\begin{proof}

  By symmetry it is enough to prove convergence only of the first equation and of one of the products $\vartheta_{D_i}^2$.

  The coefficient $f_{2,2}$ is the sum over pairs which do not bend anywhere and end close to $(2,2)$. The pairs of pants are pairs of broken lines from $(4n,1)$ and $(-4n+2,1)$. Near $(2,2)$ these lines carry the monomials $z^{\phi (4n,1)}$ and $z^{\phi (-4n+2,1)}$. Therefore we want to study the convergence of
  \[\sum |z^{\phi (4n,1) + \phi (-4n+2,1)}| < \sum |z^{n^2F}|\]
  which converges since it decays at least exponentially. Exactly the same argument applies to show the convergence of $f_{(4,2)}$, $r_i^i$ and $r_{i+2}^i$. In all these cases none of the broken lines can bend, there is a $\ZZ$ indexed family of pairs of pants and the monomials are controlled entirely by the function $\phi$. For the product $\vartheta_{D_i}^2$ there is precisely one pair of broken lines which lie in only one maximal cell of $B$. This explains the leading coefficient of $1$ appearing in the formulae.

\end{proof}

\end{proposition}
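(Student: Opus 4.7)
The plan is to reduce the proposition to a small number of representative cases using the symmetries among the three equations \eqref{eqn:MirrorToRES1}--\eqref{eqn:MirrorToRES3} (cyclic symmetry rotating the $D_i$, and the equivalent treatment of $\vartheta_{D_1}\vartheta_{D_3}$ versus $\vartheta_{D_i}^2$). Once that is done, it is enough to prove convergence for the coefficient $f_{(2,2)}$, together with one product of the form $\vartheta_{D_i}^2$. The claim is that in each case the relevant sum is dominated by a series whose monomials grow quadratically in an index parameter and thus converges exponentially.

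First I would identify, for each coefficient, which pairs of broken lines actually contribute. Since $f_{(2,2)}$ is the contribution from pairs of non-bending broken lines combining near $(2,2)$, every such pair is determined by its lifting to the universal cover shown in Figure~\ref{fig:DualIntersectionRES}, giving pairs from lattice directions $(4n,1)$ and $(-4n+2,1)$ indexed by $n \in \ZZ$. Because neither line bends, each of the two monomials on the pair is simply $z^{\phi(\cdot)}$ evaluated on the initial lattice point, so the product carries a monomial $z^{\phi(4n,1)+\phi(-4n+2,1)}$.

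Next I would apply the bounds on $\phi$ from the preceding lemma. The key point is that the evaluation $\phi(4n,1)$ is, up to a linear correction in $n$, a quadratic expression in $n$ with coefficient a positive multiple of the fibre class $F=D_1+D_2+D_3+D_4$; summing with $\phi(-4n+2,1)$ kills the linear-in-$n$ contributions and leaves a term whose leading behaviour is $n^2 F$. Therefore the series to bound is dominated term by term by $\sum_n |z^{n^2 F}|$, and this converges on a neighbourhood of the origin of the form $|z^F|<1$, establishing convergence of $f_{(2,2)}$. The same argument applies verbatim, under the obvious relabelling, to $f_{(6,2)}$, $g_{(0,2)}$, $g_{(4,2)}$, and to the two coefficients $r^i_{(i,2)}$, $r^i_{(i+2,2)}$.

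Finally, for a product $\vartheta_{D_i}^2$, I need to account for the pair of broken lines that lies entirely inside a single maximal cell of $B$; this pair contributes $1\cdot z^0=1$ and accounts for the leading coefficient $1$ in $1+r^i_{(i,2)}$. The remaining pairs are indexed by a $\ZZ$-family analogous to the case above, and the same $\phi$-bound gives an $n^2 F$-dominated sum, so convergence follows. I expect the main obstacle to be verifying that the linear-in-$n$ terms in $\phi(4n,1)+\phi(-4n+2,1)$ genuinely cancel, so that the sum is actually controlled by $n^2 F$ rather than a merely linear estimate; this requires unpacking the piecewise-linear formulae given in the preceding lemma carefully, but is a direct computation once the right pairs have been listed.
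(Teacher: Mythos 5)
Your proposal is correct and follows essentially the same route as the paper: reduce by symmetry to $f_{(2,2)}$ and one square $\vartheta_{D_i}^2$, enumerate the non-bending pairs as a $\ZZ$-indexed family from $(4n,1)$ and $(-4n+2,1)$, dominate the resulting series by $\sum_n |z^{n^2F}|$ using the explicit formulae for $\phi$, and account for the leading $1$ in $1+r^i_{(i,2)}$ by the unique pair lying in a single maximal cell. The only addition is your explicit worry about linear-in-$n$ terms, which is harmless since the quadratic coefficient of $F$ in $\phi(4n,1)+\phi(-4n+2,1)$ is strictly positive and dominates.
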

We now study the contribution to the product of points with $E(P) = 1$, i.e.,
the theta functions $\vartheta_{D_i}$, restricting our analysis by symmetry to $\vartheta_{D_1}\vartheta_{D_3}$ and $\vartheta_{D_4}^2$. Let us begin with $\vartheta_{D_1}^2$ where we describe all pairs of broken lines which contribute to $\vartheta_{D_4}$. All such pairs contain one bend which for the purpose of proving convergence we can assume occurs on the first broken line. We may also assume that the first broken line is from $(4m + 4n, 1)$ with $m, n$ positive and scatters off of $(4n, 1)$ with a monomial $z^{E+kF}$ for $E$ a section meeting $D_1$ and $k$ non-negative. As a consequence the second line is from $(-4m, 1)$. This pair of lines contributes $I_{0,0,E+kF}^{\dagger}z^{E + \phi ((4m+4n,1)) - \phi ((4n,1)) + \phi ((-4m,1)) + kF}$ to the coefficient. This sum is taken over all $m,n > 0$, $k \geq 0$ and sections $S$ meeting $D_1$ in one point. 

\begin{lemma}[Single bends converge]

The following sum converges
\[\sum_{m,n,k, S} I_{0,0,S+K}^{\dagger}z^{S + \phi ((4m + 4n, 1))  + \phi ((-4n,1)) - \phi ((4m,1)) + kF}\]
\begin{proof}
We rewrite this sum as
\begin{equation}
\left( \sum _{S,k} I_{0,0,S+kF}^{\dagger} z^{S + kF}  \right) \left( \sum_{m,n}  z^{\phi ((4m + 4n, 1)) + \phi ((-4n,1)) - \phi ((4m,1))}\right)
\end{equation}

We know that the term $I_{0,0,S+kF}^{\dagger}$ has modulus at most $2^k$, therefore for $| z^F | < 1/2$ and $| z^S | < 1$ and after applying Lemma~\ref{thm:BoundedlyManySections} one has convergence of the left hand sum.

To prove convergence of the right hand sum we expand out the sum using the explicit formula for $\phi$ found above and bound it
\[\sum_{m,n} | z^{\phi ((4m+4n,1)) - \phi ((4m,1)) + \phi ((-4n,1))} | \le \sum_{m,n} | z^{\phi ((-4n,1)) + ((2m+2n-1)D_{3})} |\]
And this converges since $\phi ((-4n,1))$ is positive and grows quadratically in $n$.

\end{proof}

\end{lemma}

Therefore we have shown that all the functions appearing in the defining equations of the mirror family are in fact holomorphic in a neighbourhood of the origin.

\section{Recognising the family}

The above convergence result does not tell us very much about the family that one obtains. Fortunately the geometry of the singular locus allows us to say much more. Consider the equations~\ref{eqn:MirrorToRES1}-\ref{eqn:MirrorToRES3}, only looking at the terms of order two these simplify to:

\begin{align}
  \vartheta_{D_1} \vartheta_{D_3} & = f_{(2,2)} \vartheta_{2D_2} + f_{(6,2)} \vartheta_{2D_4}    \label{eqn:ReducedMirrorToRES1} \\
  \vartheta_{D_2} \vartheta_{D_4} & = f_{(0,2)} \vartheta_{2D_1} + f_{(4,2)} \vartheta_{2D_3}    \label{eqn:ReducedMirrorToRES2}\\
  \vartheta_{2D_i} & = \frac{\vartheta_{D_i}^2 (1+r_{i+2,2}^{i+2})-r_{i+2,2}^i \vartheta_{D_{i+2}}^2} {(1+r_{i,2}^i)(1+r_{i+2,2}^{i+2}) - r_{i,2}^{i+2} r_{i+2,2}^{i}}\label{eqn:ReducedMirrorToRES3}
\end{align}
These equations correspond to taking the projective closure of the above family and looking at the fibre at infinity. Substituting in the final equation into the first two we obtain the following pair of quadrics:

\begin{align}
  \vartheta_{D_1} \vartheta_{D_3} & = \frac{f_{(2,2)}(1+r_{4,2}^4) - f_{(6,2)}r^4_{2,2}}{(1+r^2_{2,2})(1+r^4_{4,2})-r^4_{2,2}r^2_{4,2}} \vartheta_{D_2}^2 + \frac{f_{(6,2)}(1+r_{2,2}^2) - f_{(2,2)}r^2_{4,2}}{(1+r^2_{2,2})(1+r^4_{4,2})-r^4_{2,2}r^2_{4,2}} \vartheta_{D_4}^2    \label{eqn:Ugh1} \\
  \vartheta_{D_2} \vartheta_{D_4} & = \frac{f_{(0,2)}(1+r_{3,2}^3) - f_{(4,2)}r^3_{1,2}}{(1+r^1_{1,2})(1+r^3_{3,2})-r^3_{1,2}r^1_{3,2}} \vartheta_{D_1}^2 + \frac{f_{(4,2)}(1+r_{1,2}^1) - f_{(4,2)}r^1_{3,2}}{(1+r^1_{1,2})(1+r^3_{3,2})-r^3_{1,2}r^1_{3,2}} \vartheta_{D_3}^2 \label{eqn:Ugh2}
\end{align}

There are some relations between the functions involved in this description. To simplify this description let us restrict to the locus where the areas of each of the $D_i$ are all $v$. Let us show that this curve is smooth over this locus. We will also be interested in some questions of modularity of certain constants, and we begin with generalities of equations of the form:

\[ X_1  X_3 = tX_2 ^2 + t X_4 ^2, \quad X_2  X_4= tX_1 ^2 + t X_3 ^2\]

Putting these equations into Sage we can arrange them into Weierstrass form (using the WeierstrassForm function). This produces the equation

\[v^2 - u^3 - u(-t^8/3 - 7t^4/24 - 1/768) - (2t^{12}/27 - 11t^8/72 - 11t^4/1152 + 1/55296)\]
A generically smooth elliptic curve with $j$-invariant
\begin{align}
\frac{16777216t^{24} + 44040192t^{20} + 38731776t^{16} + 11583488t^{12} + 151296t^8 + 672t^4 + 1}{65536t^{20} - 16384t^{16} + 1536t^{12} - 64t^8 + t^4}
\label{jinvariant} \end{align}
Making the substitution $s = 4t^2$ we obtain the following expression.
\[16 \frac{(s^4 + 14 s^2 + 1)^3} {s^{2} (s -1)^4 (s + 1)^4}\]
We compare this to the formula for the $j$-invariant in terms of the Jacobi modulus $k$. By definition the $j$-invariant is given by
\[ 256 \frac{(k^4-k^2 + 1)^3}{k^4 (k^2-1)^2} \]
After a change of coordinates $k = \frac{u + 1/4}{\sqrt{u}}$ we find that this is equal to
\[256 \frac{(u^4 + 7u^2/8 + 1/256)^3}{(u^{8} + (-1/4)u^{6} + 3/128u^4 + (-1/1024)u^2 + 1/65536)u^2}\]
and so lining this up with equation~\ref{jinvariant} we find that $u = s/4$. Substituting backwards we find $k = \frac{t^2 + 1/4}{t}$.

Now let us evaluate the function $t$ for the locus where the $z^{[D_i]}$ are all equal and we express the functions in terms of the area of a general smooth fibre, say $e^{i\pi \rho}$.

\[f_{(k,2)} (v) = \sum_{n} v^{(4n+1)^2} = \Theta_2 (0,\rho)\]
\[1 + r_{i,2}^i (v) = \sum_{n \: even} v^{4n^2} = (\Theta_3 (0,\rho) + \Theta_4(0,\rho))/2\]
and
\[r_{i+2,2}^i (v) = \sum_{n \: odd} v^{4n^2} = (\Theta_3 (0,\rho) - \Theta_4(0,\rho))/2\]
where $\Theta_i (z,\rho)$ are the Jacobi theta functions. Substituting these into equations~\ref{eqn:Ugh1} and~\ref{eqn:Ugh2} we find that $t$ is equal to the ratio $\frac{\Theta_2(0,\rho)}{2 \Theta_3 (0,\rho)}$. In particular this is not constant in $\rho$ and so for generic choices is smooth. Furthermore the Jacobi modulus $k$ of this curve is equal to $\frac{\Theta_2 (0,\rho) ^2 + \Theta_3 (0,\rho)^2}{2 \Theta_2 (0,\rho) \Theta_3 (0,\rho)}$. But by definition we have that $k$ is also $\Theta_2 (0,\tau) / \Theta_3 (0,\tau)$ where $\tau$ is the fundamental period of the elliptic curve. Of course there should be a relation between these two:

\begin{align*}
  \frac{\Theta_2 (0,\rho) ^2 + \Theta_3 (0,\rho)^2}{2 \Theta_2 (0,\rho) \Theta_3 (0,\rho)} & = \frac{1}{2} \left( \frac{\Theta_3 (0,\rho)}{\Theta_2 (0,\rho)} + \frac{\Theta_2 (0,\rho)}{\Theta_3 (0,\rho)}  \right) \\
  & = \frac{\Theta _3 (\rho /2 )^2} {\Theta_2 (\rho /2)^2}
  \end{align*}

The modular group acts on the Jacobi theta functions, with $\tau \mapsto \tau + 1$ swapping the pairs $\Theta_3$ and $\Theta_4$ and $\Theta_1$ and $\Theta_2$. The map $\tau \mapsto -1/\tau$ swaps $\Theta _2$ and $\Theta _4$ but preserves $\Theta_1$ and $\Theta_3$. Conjugating the first of these by the second we see that

\[ \frac{\Theta _3 (\rho /2 )^2} {\Theta_2 (\rho /2)^2} = \frac{\Theta _2 (\frac{\rho}{2-\rho})^2} {\Theta_3 (\frac{\rho}{2-\rho})^2}\]

But this should also be equal to $\frac{\Theta _2 (\tau)^2} {\Theta_3 (\tau)^2}$ and so we can deduce that $\tau$ and $\frac{\rho}{2-\rho}$ are equal up to conjugacy under $SL (2, \ZZ)$. But we have just seen that $\rho$ and $\frac{\rho}{2-\rho}$ are also conjugate. Therefore the two associated elliptic curves are isomorphic. The limit as the area of a curve approaches zero corresponds to the limit as the imaginary part of $\rho$ goes to positive infinity. In this limit the value of $\tau$ also approaches the cusp point, and so the corresponding elliptic curve degenerates to the Tate curve. Of course we have already seen this, in the limit that we turn off all corrections this elliptic curve degenerates to a cycle of four rational curves.

This construction connects very pleasantly to the story of mirror symmetry for elliptic curves as described by Dijkgraaf in~\cite{MirrorSymmetryAndEllipticCurves} or Polischuk and Zaslow in~\cite{CategoricalMirrorSymmetryTheEllipticCurve}. These describe the mirror to an elliptic curve with complexified K\"ahler class $\alpha$ as the elliptic curve with periods $\langle 1, \alpha \rangle$. We do not need to specify a complex structure on the first curve since the Fukaya category does not depend on this choice. The choice of volume $e^{i\pi \tau}$ corresponds to a choice of complexified K{\"a}hler class on the general fibre, and thus we see a philosophical explanation of the above equality.

If we restrict now to the locus where $z^{[C]}$ vanishes for $[C].[F] > 0$ the above surface degenerates further, only the above terms appear in the defining equation. Since now the equation is homogenous in affine space we recover the family as being a deformation of a cone over the mirror elliptic curve.

\section*{Future questions}

There are several questions that immediately spring from this project. Firstly what happens as one allows the areas of the different components of the fibre to vary. The elliptic curve at infinity deforms in a controlled manner. I do not believe that there is any reason to think that the periods should be as well behaved, but rather there will be some codimension one locus under which the previous description holds. It would be interesting to describe this locus. Secondly the reason we needed the $I_4$ fibre was simply to control the combinatorics and the relevant terms in the scattering diagram. Conjecturally one should be able to reconstruct the entire Gromov-Witten theory from our description. From this one should be able to describe the theory as one smooths the boundary to less degenerate configurations, whilst the generating functions remain holomorphic. If one could do this then the construction would converge relative to any boundary fibre. Thirdly, following the previous point, one could attempt to understand how to apply this construction to the central fibre of a type II degeneration, similarly to the work of Atsushi Kanazawa in~\cite{DoranHarderThompsonConjectureViaSYZMirrorSymmetryEllipticCurves} in the elliptic curve case.

\bibliographystyle{plain}
\bibliography{Draft}

\quad \newline
\noindent
\texttt{National Center for Theoretical Sciences\\ No. 1 Sec. 4
Roosevelt Rd., National Taiwan University\\ Taipei, 106, Taiwan}

\end{document}